\documentclass{hmjart}
\usepackage{amssymb}
\usepackage{graphicx}
\setcounter{page}{1}            
\newtheorem{thm}{Theorem}[section]
\newtheorem{cor}[thm]{Corollary}
\newtheorem{lem}[thm]{Lemma}
\newtheorem{prop}[thm]{Proposition}
\theoremstyle{definition}

\theoremstyle{remark}

\theoremstyle{conjecture}

\numberwithin{equation}{section}

\begin{document}
\hmjlogo{}{}{}{}                
\title[Almost universality of a sum of norms]{Almost universality of a sum of norms}

\subjclass{Primary 11P05, 11D85; Secondary 11P55, 11D57}
\keywords{Waring's problem, Hardy Littlewood method, universal form, norm form}

\author[Park, Jeongho]{Jeongho Park}
\address{Department of Mathematics\\
  POSTECH \\
  San 31 Hyoja Dong, Nam-Gu, Pohang 790-784, KOREA.}

\email{pkskng@postech.ac.kr}
\grants{Supported by the National Research Foundation of Korea (NRF) grant funded by the Korea government(MEST)
(2010-0026473).}

\begin{abstract}
In this paper the author considers a particular type of polynomials with integer coefficients, consisting of a perfect power and two norm forms of abelian number fields with coprime discriminants. It is shown that such a polynomial represents every natural number with only finitely many exceptions. The circle method is used, and the local class field theory played a central role in estimating the singular series.
\end{abstract}

\maketitle

\section{Introduction}
Let $f = f(\overrightarrow{\zeta}) = f(\zeta_1,\zeta_2,\cdots)$ be an integer-valued polynomial in many variables that is \emph{locally universal}, meaning that for any $n\in\mathbb{N}$ and a prime $p$
\begin{equation}\label{eqn_f zeta equals n}
    f(\overrightarrow{\zeta}) = n
\end{equation}
is soluble with $\zeta_i \in \mathbb{Z}_p$. When \eqref{eqn_f zeta equals n} is soluble for all $n\in\mathbb{N}$ in integers $\zeta_1,\zeta_2,\cdots$, $f$ is said to be \emph{universal}, and when it is soluble for all but finitely many $n\in\mathbb{N}$ it is \emph{almost universal}. It is quite often the case that $f$ is not universal or even almost universal as indicated by the Brauer-Manin obstruction. The failure of the Hasse principle, though, can be in some sense overcome if we allow more variables in the equation. It is a general phenomenon that if a surface $\mathcal{S}(n)$ of a higher dimension, defined by $F(\overrightarrow{z})=n$, contains the original surface $\mathcal{S}_0(n)$ given by $f(\overrightarrow{\zeta})=n$ as a section, then it becomes almost universal, provided their codimension is sufficiently large. The Hardy-Littlewood method provides a powerful tool for estimating the number of variables necessary for this.

Although this analytic method is powerful for many problems of additive nature, it has some technical limitations and a factor of $\log (\deg f)$ is invincible in general; most of the results satisfy $\dim(\mathcal{S}(n)) / \dim(\mathcal{S}_0(n)) \gg \log (\deg f)$. The term $\log (\deg f)$ arises from the technical difficulty to handle the minor arcs. If one considers a specific polynomial that is irrelevant of this difficulty, it is provable that $\log (\deg f)$ reduces to $O(1)$.

In the 1960s, Birch, Davenport and Lewis already observed this and considered a specific type of $k$-forms. Let $K$, $E$ be the number fields of degree $k$ with integral bases $\{\omega_1,\cdots,\omega_k\}$, $\{\omega_1',\cdots,\omega_k'\}$. Denote the norm of $x_1 \omega_1 + \cdots + x_k \omega_k$ and $y_1 \omega_1' + \cdots + y_k \omega_k'$by $N_K(x_1,\cdots,x_k)$ and $N_E(y_1,\cdots,y_k)$. In ~\cite{Birch} they considered $z^k + N_K(x_1,\cdots,x_k) + N_E(y_1,\cdots,y_k)$ under some provisos, reached the following theorem as their main result.

\begin{thm}\label{thm_Birch Davenport Lewis}
Suppose $K$, $E$ are not both totally complex, and let $\delta_K$, $\delta_E$ denote their discriminants. Suppose that, for every prime $p$ dividing both $\delta_K$ and $\delta_E$, the equation
\begin{equation}\label{eqn_Birch Davenport Lewis}
    n = z^k + N_K(x_1,\cdots,x_k) + N_E(y_1,\cdots,y_k)
\end{equation}
has a non-sinigular solution in the $p$-adic field. Then \eqref{eqn_Birch Davenport Lewis} has infinitely many solutions in integers.
\end{thm}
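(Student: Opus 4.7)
The plan is to apply the Hardy--Littlewood circle method with a growing size parameter $P$. Introduce the generating functions
\[ f(\alpha) = \sum_{|z|\le P} e(\alpha z^k), \quad g_K(\alpha) = \sum_{|x_i|\le P} e\bigl(\alpha N_K(x_1,\ldots,x_k)\bigr), \]
and $g_E(\alpha)$ analogously, where $e(t) = e^{2\pi i t}$. Then
\[ R(P;n) = \int_0^1 f(\alpha)\, g_K(\alpha)\, g_E(\alpha)\, e(-\alpha n)\, d\alpha \]
counts integer solutions of (\ref{eqn_Birch Davenport Lewis}) with all variables of height at most $P$. Since the total number of variables is $2k+1$ and the form has degree $k$, the expected order of the main term is $P^{k+1}$, which tends to infinity with $P$; hence it suffices to establish a matching lower bound $R(P;n) \gg P^{k+1}$, from which infinitely many integer solutions follow by letting $P\to\infty$.

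Dissect $[0,1]$ into standard major arcs $\mathfrak{M}$ (small neighbourhoods of $a/q$ with $q \le P^\delta$) and complementary minor arcs $\mathfrak{m}$. On $\mathfrak{M}$, each factor admits a local approximation: $f(\alpha)$ is handled by the classical Weyl-sum approximation, while $g_K(\alpha)$ and $g_E(\alpha)$ are approximated by decomposing the lattices $\mathcal{O}_K$, $\mathcal{O}_E$ into residue classes modulo $q$ and exploiting the multiplicativity of the norm. Multiplying and integrating produces the asymptotic
\[ \int_\mathfrak{M} f g_K g_E\, e(-\alpha n)\, d\alpha \sim \mathfrak{S}(n)\, \mathfrak{J}(n)\, P^{k+1}. \]
Positivity of the singular series $\mathfrak{S}(n) = \prod_p \sigma_p(n)$ is then verified prime by prime: at $p \nmid \delta_K \delta_E$ both norm forms are unramified and equidistribute modulo $p^\ell$, so nonsingular mod-$p$ solutions arise already from the $z^k$ term and Hensel lift; at primes dividing exactly one of the discriminants, the other (unramified) norm form provides the needed flexibility; at $p \mid (\delta_K, \delta_E)$ the hypothesised nonsingular $p$-adic solution is invoked directly. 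Positivity of the singular integral $\mathfrak{J}(n) > 0$ uses the ``not both totally complex'' hypothesis: the relevant norm form then takes real values of both signs, so the Archimedean density at $n$ is strictly positive.

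The principal obstacle is the minor-arc estimate
\[ \int_\mathfrak{m} |f(\alpha)\, g_K(\alpha)\, g_E(\alpha)|\, d\alpha = o(P^{k+1}). \]
My approach is to apply Cauchy--Schwarz in the form
\[ \int_\mathfrak{m} |f g_K g_E|\, d\alpha \le \bigl(\sup_{\alpha\in\mathfrak{m}} |f(\alpha)|\bigr) \Bigl(\int_0^1 |g_K|^2\, d\alpha\Bigr)^{1/2} \Bigl(\int_0^1 |g_E|^2\, d\alpha\Bigr)^{1/2}. \]
Weyl's inequality yields $\sup_{\mathfrak{m}} |f| \ll P^{1-\sigma}$ for some $\sigma = \sigma(k) > 0$. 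For the mean squares, Parseval gives $\int_0^1 |g_K|^2\, d\alpha = \sum_m c_K(m)^2$, where $c_K(m) = \#\{(x_1,\ldots,x_k): |x_i|\le P,\ N_K=m\}$; parametrising each such tuple by a pair (principal ideal of norm $|m|$, unit in a bounded region of the logarithmic embedding) gives $c_K(m) \ll r_K(|m|)(\log P)^{O(1)}$, and Landau's asymptotic for the ideal-counting function yields $\sum_m c_K(m)^2 \ll P^k(\log P)^{O(1)}$. Combining these bounds produces $\int_\mathfrak{m} |f g_K g_E|\, d\alpha \ll P^{k+1-\sigma}(\log P)^{O(1)}$, which is negligible compared with the main term. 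The delicate point is the mean-square bound on $g_K$: it is precisely the ideal-theoretic structure of $\mathcal{O}_K$ (Dirichlet's unit theorem plus Landau's moments for $r_K$) that replaces generic Weyl-type analysis of $k$-forms and permits the argument to succeed with only one auxiliary $z^k$ summand.
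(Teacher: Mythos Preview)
This theorem is not proved in the present paper; it is quoted from Birch, Davenport and Lewis (the reference~\cite{Birch}), and the paper only summarises their method before establishing its own, different, result. So there is no ``paper's own proof'' to compare against line by line. That said, the paper does describe the Birch--Davenport--Lewis argument, and your outline matches it on the minor arcs: the Cauchy--Schwarz step combined with the mean-square bound $\int_0^1 |g_K|^2\,d\alpha \ll P^k(\log P)^{O(1)}$, obtained from the ideal-theoretic count of norm representations, is exactly the mechanism used in~\cite{Birch} and reused verbatim in this paper's Theorem~\ref{thm_integration over minor arcs}.

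The genuine gap in your proposal is the singular series. You assert the major-arc asymptotic $\int_{\mathfrak{M}} \sim \mathfrak{S}(n)\mathfrak{J}(n)P^{k+1}$ and then pass directly to checking that each Euler factor $\sigma_p(n)$ is positive. But to extend the truncated sum over $q\le P^\delta$ to the full series $\mathfrak{S}(n)$, and even to write $\mathfrak{S}(n)=\prod_p \sigma_p(n)$, you need quantitative control of the complete exponential sums $S_{a,q}^K = \sum_{\overrightarrow{x}\bmod q} e\bigl(\tfrac{a}{q}N_K(\overrightarrow{x})\bigr)$. The paper explicitly identifies this as the main technical difficulty: the norm form has many off-diagonal monomials, so pointwise bounds on $S_{a,q}^K$ are not directly available. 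Birch's device, as the paper reports, is an \emph{average} bound of the shape
\[
\sum_{q\le X} q^{-2k}\sum_{a\bmod^{*} q}|S_{a,q}^K|^2 \ll X^{\epsilon},
\]
which via Cauchy--Schwarz suffices to control both the tail of the singular series and the major-arc error. Your sketch omits this step entirely; without it neither the asymptotic on $\mathfrak{M}$ nor the convergence of the Euler product is justified, and merely exhibiting a nonsingular local solution at each prime does not by itself bound $\prod_p \sigma_p(n)$ away from zero.
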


The strategy in their work was to follow the routine procedure of the circle method, except for the treatment of the singular series. A typical way of dealing with the singular series is to obtain an estimate of exponential sums     $S_{a,q} = \sum_{\overrightarrow{\zeta}\;\text{mod } q} e(a f(\overrightarrow{\zeta})/q)$ by factoring them into products of exponential sums over a single variable. But a norm form $N_K(x_1,\cdots,x_k)$ has many off-diagonal terms and it is not easy to do that directly. In ~\cite{Birch} instead, a technique invented by Birch was used to get an auxiliary lemma of the type $\sum_{q\leq X} q^{-2k} \sum_{\substack{a=1\\(a,q)=1}}^q |S_{a,q}|^2 \ll X^{\epsilon}$. They also showed the positivity of the singular series, but without giving an estimate of its range.

\medskip
In this study we prove the almost universality of the polynomial $z^{k_0} + N_K(\overrightarrow{x}) + N_E(\overrightarrow{y})$ in a different setting that enlightens several new aspects. We treat abelian number fields $K$, $E$ whose degrees $k_1$, $k_2$ may be different and can be totally imaginary as well. We also provide an effective bound for the singular series. In the last section we will include an example which shows the optimality of our conclusion, i.e., that the sum of two norms can fail to be almost universal even when it is locally universal. There are very few optimal results in the theory of additive problems, but this result seems to provide one of them.

The techniques in ~\cite{Birch} may help us deduce some parts of our results, but will not exhaust the contents of this research. On a technical point of view, the significance of this study is that estimates of the exponential sum $S_{a,q}$ uses local class field theory and a restricted sum, which plays an essential role in dealing with the singular series by removing solutions that are $p$-adically singular for a `bad' prime $p$. (Here `bad' means that either $p$ is too small or it is ramified in one of $K$, $E$.)

This article is organized as follows. In Section ~\ref{sec_Notations and settings} we give some definitions, in particular of the congruence residue classes $\breve{R}_Q$ and $\breve{\frak{B}}(n)$ that will be important in the treatment of the major arcs. In Section ~\ref{sec_major arcs} we reduce the integration over major arcs into singular series and singular integral. The estimation of the singular series follows in three subsequent sections. Section ~\ref{sec_singular series I} covers the exponential sums over bad primes. Section~\ref{sec_algebraic preparation for the singular series} gives some basics about algebraic number theory, which will be used in Section~\ref{sec_singular series II} to design a particular system of representatives of $O_K/{p^L O_K}$, together with local class field theory, in order to obtain a successful bound of $S_{a,p^L}$ for good primes $p$. Section~\ref{sec_singular integral} lays down a fairly classical argument on the singular integral. We omit estimation on the minor arcs because the reasoning in ~\cite{Birch} carries over verbatim. The last section gives the desired asymptotic formula and describes how to construct a sum of two norms that is locally universal but is not almost universal, demonstrating why the term $z^k$ is necessary in our problem.

\section{Notations and Settings}\label{sec_Notations and settings}

Let $K$ and $E$ be abelian number fields with ring of integers $O_K$, $O_E$ and integral bases $\{ \varphi_1, \cdots, \varphi_{k_1}  \}$ and $\{ \psi_1, \cdots, \psi_{k_2} \}$ whose discriminants $\delta_K$, $\delta_E$ are relatively prime. Since $(\delta_K, \delta_E)=1$, for each rational prime $p$ at least one of the inertial groups $\frak{T}_K(p)$, $\frak{T}_E(p)$ is trivial. Suppose $\frak{T}_K(p)$ is trivial. Let $\frak{p}_1, \cdots, \frak{p}_g$ be the prime ideals in $K$ above $p$, and $O_{K,\frak{p}_i}$ the completion of $O_K$ at $\frak{p}_i$. The local class field theory says that the local norm map $N_{K,i} : \left(O_{K,\frak{p}_i}\right)^{\ast} \rightarrow \left(\mathbb{Z}_p\right)^{\ast}$ is surjective. We also know that $N_K(\alpha) = \prod_{i=1}^g N_{K,i}(\alpha)$ for $\alpha \in K$, where we consider $\alpha$, $N_K(\alpha)$ as elements in adequate completions of $K$ \cite{Kato}. Since $O_K$ embeds into $\prod_{i=1}^g O_{K,\frak{p}_i}$ as a dense subset, combining the surjectivity of $N_{K,i}$, the local universality of $N_K(\overrightarrow{x}) + N_E(\overrightarrow{y}) = N_{K|\mathbb{Q}}(\overrightarrow{x}\cdot\overrightarrow{\varphi}) + N_{E|\mathbb{Q}}(\overrightarrow{y}\cdot\overrightarrow{\psi})$ will follow if the local norm maps $N_{K,1},\cdots,N_{K,g}$ are all continuous. But $N_{K,i}$ is indeed continuous as we easily see. Choose a uniformizer $\pi \in \frak{p}_i$ in $O_{K,\frak{p}_i}$. Any element $\sigma \in Gal(K_{\frak{p}_i} | \mathbb{Q}_p)$ maps $\pi$ to another uniformizer in $O_{K,\frak{p}_i}$, so for an element $y \in O_{K,\frak{p}_i}$ we can write $(y\pi^{\ell})^{\sigma} = \tilde{y}(\sigma) \pi^{\ell}$ for some $\tilde{y}(\sigma) \in O_{K,\frak{p}_i}$. Thus for $x,y \in O_{K,\frak{p}_i}$, one has
\begin{equation*}
    N_{K,i} \left(x + y \pi^{\ell}\right) = \prod_{\sigma}\left(x + y \pi^{\ell}\right)^{\sigma} = \prod_{\sigma}\left(x^{\sigma} + \tilde{y}(\sigma) \pi^{\ell}\right) = N_{K,i}(x) + \pi^{\ell}z
\end{equation*}
for some $z \in O_{K,\frak{p}_i}$. But $\pi^{\ell} z = N_{K,i}\left(x + y \pi^{\ell}\right) - N_{K,i}(x) \in \frak{p}_i^{\ell} \cap \mathbb{Z}_p$, whence we have $N_{K,i}\left(x + y \pi^{\ell}\right) \equiv N_{K,i}(x)$ mod $p^{\ell}$. It follows that $N_{K,i}$ is a continuous map, and the sum of norms $N_K(\overrightarrow{x}) + N_E(\overrightarrow{y})$ is therefore locally universal.

Throughout this paper we write
\begin{multline*}
     F(\overrightarrow{z}) = F(z_0,z_1,\cdots,z_{k_1+k_2}) = F(z_0, x_1,\cdots,x_{k_1}, y_{1},\cdots,y_{k_2}) \\ = F(z_0, \overrightarrow{x}, \overrightarrow{y}) = z_0^{k_0} + N_K(\overrightarrow{x}) + N_E(\overrightarrow{y}).
\end{multline*}

$e(\alpha)$ denotes $e^{2 \pi i \alpha}$ and $v_p$ denotes the $p$-adic valuation as in tradition. $\mathbb{P}$ denotes the set of rational primes. We put $Q = \prod_{p \in \breve{\mathbb{P}}}p$ where
 \begin{equation*}
 \breve{\mathbb{P}} = \{\; p \in \mathbb{P} \; : \; p < (k_1 k_2)^{2 k_0} \text{ or $p | \delta_K \delta_E$}  \;\}.
 \end{equation*}
In particular $1 \ll Q \ll 1$ for fixed $k_0$, $K$ and $E$.

Let $\breve{R}_q^K$ be a system of representatives $\overrightarrow{x}$ modulo $q$ such that for any prime $p$ in $\breve{\mathbb{P}}$ that divides $q$, one has $N_K(\overrightarrow{x}) \not\equiv 0$ (mod $p$). $\breve{R}_q^E$ is defined in the same manner, and let $\breve{R}_q$ be a system of representatives $\overrightarrow{z} = (z_0,\overrightarrow{x},\overrightarrow{y})$ (mod $q$) with $\overrightarrow{x} \in \breve{R}_q^K$, $\overrightarrow{y} \in \breve{R}_q^E$. Note that $|\breve{R}_{qr}| = |\breve{R}_q| |\breve{R}_r|$ if $(q,r) = 1$. For brevity in the sequel we write $\overrightarrow{z} \equiv \breve{R}_q$ for an integral vector $\overrightarrow{z}$ if $\overrightarrow{z} \equiv \overrightarrow{a}$ (mod $q$) for some $\overrightarrow{a} \in \breve{R}_q$. Observe that $\overrightarrow{z} \equiv \breve{R}_q$ for all $q \geq 1$ if and only if $\overrightarrow{z} \equiv \breve{R}_Q$.

Throughout this article we let $k = \max\{k_0,k_1,k_2\}$ and $X_i = n^{1/{k_i}}$. In the definition of major and minor arcs, we consider the Farey dissection of order $n^{1-\nu}$ where $\nu$ is a fixed small positive number, in particular, less than $\frac{1}{5k}$. For a technical reason in the estimation of the singular integral, we need to choose small boxes $\frak{B}_i$ in $[0,1]$, $[0,1]^{k_1}$ and $[0,1]^{k_2}$ so that $\frak{B} = \frak{B}_0 \times \frak{B}_1 \times \frak{B}_2$ is around a nonsingular point of the Euclidean surface defined by $F(\overrightarrow{\phi}) = 1$. Let $X_i \frak{B}_i = \{ \overrightarrow{r}\; : \; \frac{1}{X_i} \overrightarrow{r} \in \frak{B}_i \}$. The \emph{restricted sum} $\sum_{\overrightarrow{z} \in \breve{\frak{B}}(n)} e(\alpha F(\overrightarrow{z}))$ is defined over
\begin{equation*}
    \breve{\frak{B}}(n) = \{ \overrightarrow{r} \in \left( X_0\frak{B}_0 \times X_1\frak{B}_1 \times X_2\frak{B}_2 \right) \bigcap \mathbb{Z}^{1+k_1+k_2}  \; : \;  \overrightarrow{r} \equiv \breve{R}_Q \}.
\end{equation*}
$\breve{\frak{B}}^K(n)$ is defined similarly using $X_1\frak{B}_1 \bigcap \mathbb{Z}^{k_1}$ and $\breve{R}_Q^K$, and so is $\breve{\frak{B}}^E(n)$.  We want to estimate the number of solutions to $F(\overrightarrow{z}) = n$ with $\overrightarrow{z} \in \breve{\frak{B}}(n)$, viz,
\begin{equation*}
\breve{r}(n) = \int_{c}^{1+c} \sum_{\overrightarrow{z} \in \breve{\frak{B}}(n)} e(\alpha F(\overrightarrow{z}))e(-n \alpha )d\alpha
\end{equation*}
for some real number $c$.

For simplicity we write $\sum_{a\text{ mod* } q}$ to denote the summation over $a$ that runs through $1$ to $q$ under the condition $(a,q)=1$. We define the major arcs for $1 \leq a \leq q \leq n^{\nu}$ and $(a,q)=1$ by
\begin{equation*}
    \frak{M}(q,a) = \{ \alpha\; : \; |\alpha - a/q | \leq n^{-1+\nu} \}, \quad
    \frak{M} = \bigcup_{q \leq n^{\nu}}\bigcup_{a\text{ mod* }q}\frak{M}(q,a)
\end{equation*}
and put the minor arcs $\frak{m} = (n^{\nu-1},1+n^{\nu-1}] - \frak{M}$. Note that $\frak{M}$ is a disjoint union of the segments $\frak{M}(q,a)$.

\section{The Major Arcs}\label{sec_major arcs}
For each rational prime $p$ and $(a,p) = 1$, let
\begin{equation*}
S_{a,p^L}^K = \sum_{\overrightarrow{x} \text{ mod $p^L$}} e\left( \frac{a}{p^L} N_K(\overrightarrow{x}) \right), \quad  \breve{S}_{a,p^L}^K = \sum_{\overrightarrow{x} \in \breve{R}_{p^L}^K} e\left( \frac{a}{p^L} N_K(\overrightarrow{x}) \right).
\end{equation*}
$S_{a,p^L}^E$ and $\breve{S}_{a,p^L}^E$ are defined in the same manner.
For $(a,q)=1$ we also put
\begin{gather*}
\breve{S}_{a,q} = \sum_{\overrightarrow{z} \in \breve{R}_q} e \left( \frac{a}{q}F(\overrightarrow{z}) \right), \quad I(\beta) = \int_{X_0\frak{B}_0 \times X_1\frak{B}_1 \times X_2\frak{B}_2} e\left( \beta F(\overrightarrow{r}) \right) d \overrightarrow{r}.
\end{gather*}

Let $\kappa(q)$ be the product of distinct prime factors of $(q,Q)$.
\begin{lem}\label{lem_Restricted sum}
Assume $|\beta| \leq n^{\nu-1}$, $1 \leq a \leq q \leq n^{\nu}$, $(a,q)=1$. Then
\begin{multline*}
    \sum_{\overrightarrow{z} \in \breve{\frak{B}}(n)} e\left( \left(\frac{a}{q} + \beta\right)F(\overrightarrow{z}) \right) \\ = \frac{|\breve{R}_Q|}{Q^{1+k_1+k_2}} \frac{\kappa(q)^{1+k_1+k_2}}{|\breve{R}_{\kappa(q)}|}
    \frac{1}{q^{1+k_1+k_2}} \breve{S}_{a,q} I(\beta) + O\left( n^{2 + 1/{k_0} - 1/k + 2\nu} \right)
\end{multline*}
\end{lem}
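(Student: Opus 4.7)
The plan is to carry out the standard major-arc reduction, separating the rational phase $a/q$ (which becomes the character sum $\breve{S}_{a,q}$) from the small parameter $\beta$ (which becomes $I(\beta)$), while tracking the restriction $\overrightarrow{z} \equiv \breve{R}_Q$. I would set $Q^* = \mathrm{lcm}(Q,q)$; because $Q$ is squarefree, $Q^* = Qq/\kappa(q)$, and since every prime of $\breve{\mathbb{P}}$ divides $Q$, the condition $\overrightarrow{z} \equiv \breve{R}_Q$ is equivalent to $\overrightarrow{z} \bmod Q^* \in \breve{R}_{Q^*}$. Grouping $\overrightarrow{z} \in \breve{\frak{B}}(n)$ by $\overrightarrow{a} = \overrightarrow{z} \bmod Q^*$, and using $q \mid Q^*$ to split the phase as $e(\tfrac{a}{q}F(\overrightarrow{a}))\cdot e(\beta F(\overrightarrow{z}))$, I obtain
\[
\sum_{\overrightarrow{z}\in\breve{\frak{B}}(n)} e\!\left(\bigl(\tfrac{a}{q}+\beta\bigr)F(\overrightarrow{z})\right) = \sum_{\overrightarrow{a}\in\breve{R}_{Q^*}} e\!\left(\tfrac{a}{q}F(\overrightarrow{a})\right)\, \Sigma(\overrightarrow{a},\beta),
\]
where $\Sigma(\overrightarrow{a},\beta)$ denotes the inner sum of $e(\beta F(\overrightarrow{z}))$ over $\overrightarrow{z} \equiv \overrightarrow{a} \pmod{Q^*}$ lying in the box $X_0\frak{B}_0 \times X_1\frak{B}_1 \times X_2\frak{B}_2$.

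Next I would approximate $\Sigma(\overrightarrow{a},\beta) = (Q^*)^{-d} I(\beta) + (\text{error})$ with $d=1+k_1+k_2$. The key Lipschitz bound is $|\beta||\nabla F| \ll n^{\nu-1}\cdot n^{1-1/k} = n^{\nu - 1/k}$, valid since $|\partial F/\partial z_0| \leq k_0 n^{1-1/k_0}$ and $|\partial N_K/\partial x_j|,\ |\partial N_E/\partial y_j| \ll n^{1-1/k}$ on the box. Hence the variation of $e(\beta F)$ over any cell of side $Q^*$ is $\ll Q^* n^{\nu-1/k}$; multiplying by the cell count $\ll n^{2+1/k_0}/(Q^*)^d$ (the box has volume $\asymp X_0 X_1^{k_1}X_2^{k_2} = n^{2+1/k_0}$) and summing over the $\leq (Q^*)^d$ residues $\overrightarrow{a}$ yields an aggregate error $\ll Q^* \cdot n^{2+1/k_0 - 1/k + \nu} \ll n^{2+1/k_0 - 1/k + 2\nu}$, since $Q^* \leq Qq \ll n^{\nu}$.

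The remaining step is to evaluate the character sum $\sum_{\overrightarrow{a}\in\breve{R}_{Q^*}} e(\tfrac{a}{q}F(\overrightarrow{a}))$ via the Chinese remainder theorem. Decomposing $a/q \equiv \sum_p a_p/p^{v_p(q)} \pmod 1$ and $\overrightarrow{a}$ into local components $\overrightarrow{a}_p \in \breve{R}_{p^{v_p(Q^*)}}$, the sum factors across primes. For $p \in \breve{\mathbb{P}}$ with $p \nmid q$ the local factor collapses to $|\breve{R}_p|$; for $p \mid q$ it is $\breve{S}_{a_p, p^{v_p(q)}}$ (restricted or unrestricted according to whether $p \in \breve{\mathbb{P}}$). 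The product over $p \mid q$ reassembles by CRT into $\breve{S}_{a,q}$, while $\prod_{p \in \breve{\mathbb{P}},\, p \nmid q} |\breve{R}_p| = |\breve{R}_Q|/|\breve{R}_{\kappa(q)}|$. Substituting $(Q^*)^{-d} = \kappa(q)^d/(Q^d q^d)$ then produces the leading term in the stated form.

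The principal obstacle is the bookkeeping in the CRT step: the restriction from $\breve{R}_Q$ lives at primes in $\breve{\mathbb{P}}$ while the character lives at primes dividing $q$, and one must verify that their overlap at $\breve{\mathbb{P}}\cap\{p\mid q\}$ collapses cleanly to the single factor $\breve{S}_{a,q}$ alongside the density ratio $|\breve{R}_Q|/|\breve{R}_{\kappa(q)}|$ predicted by the statement. The sum-to-integral approximation, by contrast, is entirely routine once the partial-derivative bound $|\nabla F|\ll n^{1-1/k}$ has been recorded.
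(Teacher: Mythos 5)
Your proposal is correct and follows essentially the same route as the paper: the paper likewise sets $\frak{q}=\mathrm{lcm}(q,Q)=qQ/\kappa(q)$, groups the sum by residues $\overrightarrow{b}\in\breve{R}_{\frak{q}}$, approximates the inner sum over the coset by $\frak{q}^{-(1+k_1+k_2)}I(\beta)$ using the gradient bound $|\nabla F|\ll n^{1-1/k}$, and then evaluates $\sum_{\overrightarrow{b}\in\breve{R}_{\frak{q}}}e\bigl(\frac{a}{q}F(\overrightarrow{b})\bigr)=|\breve{R}_{Q/\kappa(q)}|\,\breve{S}_{a,q}$ via the coprime CRT splitting $\frak{q}=q\cdot(Q/\kappa(q))$, which is a cleaner packaging of your prime-by-prime bookkeeping. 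The one detail you elide is the boundary contribution from cells of the tiling only partially contained in $X_0\frak{B}_0\times X_1\frak{B}_1\times X_2\frak{B}_2$, which the paper bounds by $\ll n^{2+1/k_0-1/k}/\frak{q}^{k_1+k_2}$ per residue and which is dominated by the Lipschitz term you do track, so the final $O\bigl(n^{2+1/k_0-1/k+2\nu}\bigr)$ error still holds.
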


\begin{proof}
Let $\frak{q} = lcm(q,Q) = q \frac{Q}{\kappa(q)}$ and write $\overrightarrow{z} = \frak{q} \overrightarrow{a} + \overrightarrow{b}$, $0 \leq b_i < \frak{q}$. Let $\overline{A(\frak{q},\overrightarrow{b})} = \{ \overrightarrow{\eta} \in \mathbb{R}^{1+k_1+k_2} \; : \; \frak{q} \overrightarrow{\eta} + \overrightarrow{b} \in  X_0\frak{B}_0 \times X_1\frak{B}_1 \times X_2\frak{B}_2\}$ and $A(\frak{q},\overrightarrow{b}) = \overline{A(\frak{q},\overrightarrow{b})} \bigcap \mathbb{Z}^{1+k_1+k_2}$. One easily has
\begin{align*}
    &\sum_{\overrightarrow{z} \in \breve{\frak{B}}(n)} e\left( \left(\frac{a}{q} + \beta\right)F(\overrightarrow{z}) \right)\\
    &= \sum_{\overrightarrow{b} \in \breve{R}_{\frak{q}}} \sum_{ \overrightarrow{a} \in A(\frak{q},\overrightarrow{b}) } e\left( \frac{a}{q}F(\frak{q}\overrightarrow{a} + \overrightarrow{b}) \right) e\left( \beta F(\frak{q}\overrightarrow{a} + \overrightarrow{b}) \right) \\
    &= \sum_{\overrightarrow{b} \in \breve{R}_{\frak{q}}} e\left( \frac{a}{q}F(\overrightarrow{b}) \right) \sum_{ \overrightarrow{a} \in A(\frak{q},\overrightarrow{b}) } e\left( \beta F(\frak{q}\overrightarrow{a} + \overrightarrow{b}) \right).
\end{align*}

Let $h: \mathbb{R}^{1+k_1+k_2} \rightarrow \mathbb{C}$ be a map defined by $h(\overrightarrow{\eta}) = e(\beta F(\frak{q} \overrightarrow{\eta} + \overrightarrow{b}))$. If $\overrightarrow{a}$ is an integral vector near $\overrightarrow{\eta}$ so that $| \eta_i - a_i| \leq 1/2$ for all $0 \leq i \leq k_1 + k_2$, and $M$ is the supremum of the absolute value of directional derivatives of $h(\overrightarrow{\eta})$ for $\overrightarrow{\eta} \in \overline{A(\frak{q},\overrightarrow{b})}$, then clearly $|h(\overrightarrow{\eta}) - h(\overrightarrow{a})| \ll M$. Observe that any convex body $U \subset \overline{A(\frak{q},\overrightarrow{b})}$ can be divided into unit boxes together with at most $O\left( \frac{X_0}{\frak{q}} \left( \frac{X_1}{\frak{q}} \right)^{k_1} \left( \frac{X_2}{\frak{q}} \right)^{k_2} \left( \frac{\frak{q}}{X_0} + \frac{\frak{q}}{X_1} + \frac{\frak{q}}{X_2} \right)\right)$ possible broken boxes. Hence
\begin{multline*}
    \left| \int_{\overline{A(\frak{q},\overrightarrow{b})}} h(\overrightarrow{\eta}) d\overrightarrow{\eta}  - \sum_{ \overrightarrow{a} \in A(\frak{q},\overrightarrow{b}) } h(\overrightarrow{a}) \right| \\ \ll \left|\overline{A(\frak{q},\overrightarrow{b})}\right| M + \frac{X_0 X_1^{k_1} X_2^{k_2}}{\frak{q}^{k_1+k_2}}\left( \frac{1}{X_0} +  \frac{1}{X_1} + \frac{1}{X_2} \right) \sup_{\overrightarrow{\eta}}|h(\overrightarrow{\eta})|
\end{multline*}
 where $\left|\overline{A(\frak{q},\overrightarrow{b})}\right|$ is the Lebesgue measure of $\overline{A(\frak{q},\overrightarrow{b})}$. Here
\begin{gather*}
    \left|\overline{A(\frak{q},\overrightarrow{b})}\right| \ll \frac{X_0 X_1^{k_1} X_2^{k_2}}{\frak{q}^{1+k_1+k_2}}, \quad \sup_{\overrightarrow{\eta}}|h(\overrightarrow{\eta})| = 1,
\end{gather*}
and
\begin{equation*}
M \ll \sum_{i=0}^{k_1 + k_2} \left| \frac{\partial}{\partial \eta_i} h(\overrightarrow{\eta}) \right| \ll \frak{q} |\beta| \left( X_0^{k_0-1} + X_1^{k_1-1} + X_2^{k_2-1} \right) \ll \frak{q} |\beta| n^{1 - 1/k}
\end{equation*}
which gives
\begin{align*}
    \left| \int_{\overline{A(\frak{q},\overrightarrow{b})}} h(\overrightarrow{\eta}) d\overrightarrow{\eta}  - \sum_{ \overrightarrow{a} \in A(\frak{q},\overrightarrow{b}) } h(\overrightarrow{a}) \right|
    &\ll \frac{n^{2+1/{k_0}}}{\frak{q}^{k_1 + k_2}} n^{1 - 1/k} |\beta| + \frac{n^{2 + 1/{k_0} - 1/k}}{\frak{q}^{k_1 + k_2}} \\
    &\ll \frac{n^{2 + 1/{k_0} - 1/k + \nu}}{\frak{q}^{k_1 + k_2}}.
\end{align*}
Writing $\frak{q} \overrightarrow{\eta} + \overrightarrow{b} = \overrightarrow{r}$,
\begin{align*}
\int_{\overline{A(\frak{q},\overrightarrow{b})}} h(\overrightarrow{\eta})d\overrightarrow{\eta} &= \int_{\overline{A(\frak{q},\overrightarrow{b})}} e\left( \beta F(\frak{q} \overrightarrow{\eta} + \overrightarrow{b}) \right) d\overrightarrow{\eta}\\
 &= \int_{X_0\frak{B}_0 \times X_1\frak{B}_1 \times X_2\frak{B}_2 } e\left( \beta F(\overrightarrow{r}) \right) d \overrightarrow{r} \frac{1}{\frak{q}^{1+k_1+k_2}}\\
 &= \frac{1}{\frak{q}^{1+k_1+k_2}}I(\beta).
\end{align*}
Since $\frak{q} = q \frac{Q}{\kappa(q)}$, one has
\begin{equation*}
    \sum_{\overrightarrow{b} \in \breve{R}_{\frak{q}}} e\left( \frac{a}{q}F(\overrightarrow{b}) \right) = |\breve{R}_{Q/\kappa(q)}|\sum_{\overrightarrow{b} \in \breve{R}_{q}} e\left( \frac{a}{q}F(\overrightarrow{b}) \right) = |\breve{R}_{Q/\kappa(q)}| \breve{S}_{a,q}.
\end{equation*}

It follows that
\begin{align*}
    &\sum_{\overrightarrow{z} \in \breve{\frak{B}}(n)} e\left( \left(\frac{a}{q} + \beta\right)F(\overrightarrow{z}) \right)\\
    &= \sum_{\overrightarrow{b} \in \breve{R}_{\frak{q}}} e\left( \frac{a}{q}F(\overrightarrow{b}) \right) \left( \frac{1}{\frak{q}^{1+k_1+k_2}} I(\beta) + O\left( \frac{n^{2 + 1/{k_0} - 1/k + \nu}}{\frak{q}^{k_1 + k_2}} \right) \right)\\
    &= \frac{|\breve{R}_{Q/\kappa(q)}|}{(Q/\kappa(q))^{1+k_1+k_2}} \frac{\breve{S}_{a,q}}{q^{1+k_1+k_2}} I(\beta) + O\left( |\breve{R}_{\frak{q}}|\frac{n^{2 + 1/{k_0} - 1/k + \nu}}{\frak{q}^{k_1 + k_2}} \right)\\
    &= \frac{|\breve{R}_Q|}{Q^{1+k_1+k_2}} \frac{\kappa(q)^{1+k_1+k_2}}{|\breve{R}_{\kappa(q)}|} \frac{\breve{S}_{a,q}}{q^{1+k_1+k_2}} I(\beta) +  O\left( \frak{q} n^{2 + 1/{k_0} - 1/k + \nu} \right)
\end{align*}
and the lemma follows.
\qed\end{proof}

We introduce typical notations for the major arcs now. Compared to the classical ones, the singular series contains some additional terms in its summands as we use the restricted sum over $\overrightarrow{z} \in \breve{\frak{B}}(n)$.

\begin{gather*}
\breve{A}_n(q) = \sum_{a\text{ mod* }q} \frac{\breve{S}_{a,q}}{q^{1+k_1+k_2}}e\left( -\frac{a}{q}n \right), \quad  \breve{\frak{S}}(X,n) = \sum_{q \leq X} \frac{\kappa(q)^{1+k_1+k_2}}{|\breve{R}_{\kappa(q)}|} \breve{A}_n(q)\\
    \breve{\chi}_n(p) = \sum_{l=0}^{\infty} \frac{\kappa(p^l)^{1+k_1+k_2}}{|\breve{R}_{\kappa(p^l)}|} \breve{A}_n(p^l).\\
\end{gather*}

In particular $\breve{A}_n(1) = 1$. Possible issues on the convergence of $\breve{\chi}_n(p)$ will be clarified later. We also define the singular integral
\begin{equation*}
\frak{J}(c) = \int_{-c}^c \int_{\frak{B}} e(\gamma F(\overrightarrow{\zeta}))d \overrightarrow{\zeta} e(-\gamma) d \gamma.
\end{equation*}

\begin{thm}\label{thm_integration over major arcs}
\begin{multline}\label{eqn_integration over major arcs}
\int_{\frak{M}} \sum_{\overrightarrow{z} \in \breve{\frak{B}}(n)} e(\alpha F(\overrightarrow{z}))e(-n \alpha )d\alpha \\ = \frac{|\breve{R}_Q|}{Q^{1+k_1+k_2}} \breve{\frak{S}}(n^{\nu},n) \frak{J}(n^{\nu}) n^{1+1/{k_0}}+ O\left( n^{1 + 1/{k_0} - 1/k + 5\nu} \right).
\end{multline}
\end{thm}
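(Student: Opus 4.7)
The plan is to insert the asymptotic formula of Lemma~\ref{lem_Restricted sum} into the integral over $\frak{M}$, partitioned according to the Farey dissection, and then identify the resulting main term with the right-hand side of (\ref{eqn_integration over major arcs}). Concretely I would write
\begin{equation*}
\int_{\frak{M}} = \sum_{q \leq n^{\nu}} \sum_{a\text{ mod* } q} \int_{\frak{M}(q,a)},
\end{equation*}
and on each major arc change variables $\alpha = a/q + \beta$ with $|\beta| \leq n^{\nu-1}$. Lemma~\ref{lem_Restricted sum} then replaces the inner sum by the main term plus an error of size $n^{2+1/{k_0}-1/k+2\nu}$. Pulling the factor $e(-n\alpha) = e(-an/q)e(-n\beta)$ through and collecting the $a$-sum via the identity $\sum_{a\text{ mod* }q} \frac{\breve{S}_{a,q}}{q^{1+k_1+k_2}} e(-an/q) = \breve{A}_n(q)$, the main term consolidates to
\begin{equation*}
\frac{|\breve{R}_Q|}{Q^{1+k_1+k_2}} \sum_{q \leq n^{\nu}} \frac{\kappa(q)^{1+k_1+k_2}}{|\breve{R}_{\kappa(q)}|} \breve{A}_n(q) \int_{-n^{\nu-1}}^{n^{\nu-1}} I(\beta) e(-n\beta) \, d\beta.
\end{equation*}

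Next I would relate the $\beta$-integral to $\frak{J}(n^{\nu})$ by a homogeneity argument. Writing $\overrightarrow{r} = (X_0 \zeta_0, X_1 \overrightarrow{\zeta}_1, X_2 \overrightarrow{\zeta}_2)$ and using the fact that each summand of $F$ is homogeneous of its own degree together with $X_i = n^{1/k_i}$ yields $F(\overrightarrow{r}) = n F(\overrightarrow{\zeta})$, while the Jacobian is $X_0 X_1^{k_1} X_2^{k_2} = n^{2+1/{k_0}}$. Substituting $\gamma = n\beta$ then converts the $\beta$-integral into
\begin{equation*}
n^{1+1/{k_0}} \int_{-n^{\nu}}^{n^{\nu}} \int_{\frak{B}} e\bigl( \gamma F(\overrightarrow{\zeta}) \bigr) d\overrightarrow{\zeta} \, e(-\gamma) \, d\gamma = n^{1+1/{k_0}} \frak{J}(n^{\nu}),
\end{equation*}
and the remaining $q$-sum is exactly $\breve{\frak{S}}(n^{\nu}, n)$ by definition, producing the asserted main term.

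For the error, Lemma~\ref{lem_Restricted sum} gives an error of $O(n^{2+1/{k_0} - 1/k + 2\nu})$ uniformly in $\beta$. Each arc $\frak{M}(q,a)$ has length $2n^{\nu-1}$, and the number of pairs $(q,a)$ with $1 \leq a \leq q \leq n^{\nu}$, $(a,q)=1$, is $\sum_{q \leq n^{\nu}} \varphi(q) \ll n^{2\nu}$. Multiplying gives a total error bound
\begin{equation*}
n^{2+1/{k_0} - 1/k + 2\nu} \cdot n^{\nu-1} \cdot n^{2\nu} = n^{1+1/{k_0} - 1/k + 5\nu},
\end{equation*}
which matches the claimed error.

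The argument is essentially bookkeeping once Lemma~\ref{lem_Restricted sum} is in place, so the main obstacle is simply to verify the scaling of $I(\beta)$ cleanly when $k_0$, $k_1$, $k_2$ may differ; the fact that $F$ splits as a sum of three terms each homogeneous of degree $k_i$ in the corresponding block of variables, together with the choice $X_i = n^{1/k_i}$, ensures that all three blocks rescale compatibly so that $F(\overrightarrow{r}) = n F(\overrightarrow{\zeta})$ and the change $\gamma = n\beta$ yields the singular integral with no leftover dimension-dependent factors.
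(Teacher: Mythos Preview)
Your proposal is correct and follows essentially the same route as the paper: split $\frak{M}$ into the arcs $\frak{M}(q,a)$, apply Lemma~\ref{lem_Restricted sum} on each, assemble the $a$-sum into $\breve{A}_n(q)$ and the $q$-sum into $\breve{\frak{S}}(n^{\nu},n)$, and rescale $I(\beta)$ via $\gamma = n\beta$ using block-homogeneity to obtain $n^{1+1/k_0}\frak{J}(n^{\nu})$. Your error accounting (arc length $2n^{\nu-1}$, $O(n^{2\nu})$ arcs, per-arc error $O(n^{2+1/k_0-1/k+2\nu})$) also matches the paper's computation exactly.
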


\begin{proof}
Let $\frak{J}(c,n) = \int_{-c}^c I(\beta) e(-n \beta) d\beta$.
Note that there are $O(n^{2\nu})$ pairs $(q,a)$ in the major arcs and each interval $\frak{M}_{(q,a)}$ is of length $2 n^{\nu-1}$. From Lemma~\ref{lem_Restricted sum},

\begin{align*}
&\sum_{q \leq n^{\nu}} \sum_{a\text{ mod*}q} \int_{\frak{M}_{(q,a)}}  \sum_{\overrightarrow{z} \in \breve{\frak{B}}(n)} e(\alpha F(\overrightarrow{z}))  e(-n \alpha) d \alpha\\
&= \sum_{q \leq n^{\nu}} \sum_{a\text{ mod*}q} \frac{|\breve{R}_Q|}{Q^{1+k_1+k_2}} \frac{\kappa(q)^{1+k_1+k_2}}{|\breve{R}_{\kappa(q)}|} \frac{\breve{S}_{a,q}}{q^{1+k_1+k_2}}  e\left(-\frac{a}{q}n\right) \frak{J}(n^{\nu-1},n)\\
&\quad \quad + O\left( n^{2\nu} n^{\nu-1} n^{2 + 1/{k_0} - 1/k +2\nu} \right)\\
&=  \frac{|\breve{R}_Q|}{Q^{1+k_1+k_2}} \breve{\frak{S}}(n^{\nu},n) \frak{J}(n^{\nu-1},n) + O\left( n^{1 + 1/{k_0} - 1/k +5\nu} \right).
\end{align*}

Put $\overrightarrow{r} = (X_0 \zeta_0, X_1\zeta_1,\cdots,X_1\zeta_{k_1},X_2\zeta_{k_1+1},\cdots,X_2\zeta_{k_1+k_2})$ and $\gamma = n\beta$ so that $\beta F(\overrightarrow{r}) = \gamma F(\overrightarrow{\zeta})$. Then
\begin{align*}
    \frak{J}(n^{\nu-1},n) &= \int_{-n^{\nu-1}}^{n^{\nu-1}} \int_{X_0 \frak{B}_0 \times X_1 \frak{B}_1 \times X_2 \frak{B}_2} e(\beta F(\overrightarrow{r})) d \overrightarrow{r} e(-n \beta) d \beta\\
    &= \int_{-n^{\nu}}^{n^{\nu}} X_0 X_1^{k_1} X_2^{k_2} \int_{\frak{B}} e(\gamma F(\overrightarrow{\zeta})) d \overrightarrow{\zeta} e(-\gamma)\frac{d \gamma}{n}\\
    &= \frak{J}(n^{\nu})n^{1 + 1/{k_0}}
\end{align*}
and \eqref{eqn_integration over major arcs} follows.
\qed\end{proof}

\section{The singular series: Bad primes}\label{sec_singular series I}

\begin{lem}\label{lem_S_aq_S_br_S_arbq_qr}
If $(q,r) = (a,q) = (b,r)=1$ then $\breve{S}_{a,q} \breve{S}_{b,r} = \breve{S}_{ar+bq,qr}$.
\end{lem}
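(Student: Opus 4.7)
The plan is to prove this via the Chinese Remainder Theorem, which is the standard mechanism behind multiplicativity of such twisted exponential sums. The setup: expand the product as a double sum over $\overrightarrow{u} \in \breve{R}_q$ and $\overrightarrow{v} \in \breve{R}_r$, then reindex by $\overrightarrow{z} \in \breve{R}_{qr}$ determined by $\overrightarrow{z} \equiv \overrightarrow{u} \pmod q$ and $\overrightarrow{z} \equiv \overrightarrow{v} \pmod r$.

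First I would verify that this reindexing is a bijection that respects the restriction $\breve R$. Since $(q,r)=1$, a prime $p \in \breve{\mathbb{P}}$ divides $qr$ iff it divides exactly one of $q$ or $r$. If $p \mid q$, then the condition $N_K(\overrightarrow{x}) \not\equiv 0 \pmod p$ on the $K$-part $\overrightarrow{x}$ of $\overrightarrow{z}$ is the same as the corresponding condition on the $K$-part of $\overrightarrow{u}$, since $\overrightarrow{z} \equiv \overrightarrow{u} \pmod q$ and in particular mod $p$; similarly for $p \mid r$ with $\overrightarrow{v}$. Therefore $\overrightarrow{z} \in \breve R_{qr}$ if and only if $\overrightarrow{u} \in \breve R_q$ and $\overrightarrow{v} \in \breve R_r$, so CRT gives a bijection between $\breve R_q \times \breve R_r$ and $\breve R_{qr}$.

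Next I would handle the exponent. Because $F$ has integer coefficients and $\overrightarrow{z} \equiv \overrightarrow{u} \pmod q$, the difference $F(\overrightarrow{z}) - F(\overrightarrow{u})$ is divisible by $q$, so $\tfrac{a}{q} F(\overrightarrow{u}) \equiv \tfrac{a}{q} F(\overrightarrow{z}) \pmod 1$, and likewise $\tfrac{b}{r} F(\overrightarrow{v}) \equiv \tfrac{b}{r} F(\overrightarrow{z}) \pmod 1$. Adding these and using $\tfrac{a}{q} + \tfrac{b}{r} = \tfrac{ar+bq}{qr}$, the summand becomes $e\bigl(\tfrac{ar+bq}{qr} F(\overrightarrow{z})\bigr)$. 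A small but necessary check is $(ar+bq,qr)=1$: reducing mod $q$ gives $ar$, coprime to $q$ by $(a,q)=(r,q)=1$, and symmetrically mod $r$. Thus the summation is exactly $\breve S_{ar+bq,qr}$.

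No step is really an obstacle; the only subtlety is ensuring that the restriction $\overrightarrow{z} \equiv \breve R_q$ is respected by CRT, which rests on the fact that the constraints defining $\breve R$ are prime-by-prime and that $(q,r)=1$ keeps the relevant primes separated between the two factors.
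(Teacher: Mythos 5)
Your proof is correct and takes essentially the same CRT-reindexing approach as the paper. The only difference is that the paper parameterizes the bijection explicitly as $\overrightarrow{s} = r\overrightarrow{z} + q\overrightarrow{\xi}$ (so $\overrightarrow{s} \equiv r\overrightarrow{z} \pmod q$, requiring the additional observation that multiplication by the unit $r$ permutes the restricted representatives mod $q$), whereas your abstract CRT reindexing via $\overrightarrow{z} \equiv \overrightarrow{u} \pmod q$, $\overrightarrow{z} \equiv \overrightarrow{v} \pmod r$ preserves residues directly and is slightly cleaner.
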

\begin{proof}
Write $\overrightarrow{z} = (z_0, \overrightarrow{x}, \overrightarrow{y})$ and $\overrightarrow{\xi} = (\xi_0, \overrightarrow{\alpha}, \overrightarrow{\beta})$.

\begin{align*}
    &\sum_{\overrightarrow{z} \in \breve{R}_q} e\left( \frac{a}{q}F(\overrightarrow{z}) \right) \sum_{\overrightarrow{\xi} \in \breve{R}_r} e\left( \frac{b}{r}F(\overrightarrow{\xi}) \right)\\
    &= \sum_{\overrightarrow{z} \in \breve{R}_q} \sum_{\overrightarrow{\xi} \in \breve{R}_r}  e\left( \frac{a}{q}F(\overrightarrow{z}) + \frac{b}{r}F(\overrightarrow{\xi}) \right)\\
    &= \sum_{z_0 = 1}^q \sum_{\xi_0 = 1}^r \sum_{\overrightarrow{x} \in \breve{R}_q^{K}} \sum_{\overrightarrow{\alpha} \in \breve{R}_r^{K}} \sum_{\overrightarrow{y} \in \breve{R}_q^{E}} \sum_{\overrightarrow{\beta} \in \breve{R}_r^{E}}e\left( \frac{a}{q}F(\overrightarrow{z}) + \frac{b}{r}F(\overrightarrow{\xi}) \right).
\end{align*}
Let $s_0 = r z_0 + q\xi_0$ so that $s_0$ takes every value modulo $qr$. Since $q \in O_K$ is a unit modulo $r O_K$, for any $\overrightarrow{\alpha} \in \breve{R}_r^K$ one has $N_K(q\overrightarrow{\alpha}) \not\equiv 0$ (mod $p$) for all $p \in \breve{P}$ that divides $r$, i.e., $q\overrightarrow{\alpha} \equiv \breve{R}_r^K$. Because the map $\overrightarrow{\alpha}\cdot\overrightarrow{\varphi}\mapsto q \overrightarrow{\alpha}\cdot\overrightarrow{\varphi}$ gives a bijection between systems of representatives of $O_K$ modulo $r O_K$, we obtain a bijection $q \breve{R}_r^K \rightarrow \breve{R}_r^K$. In the same manner, the set
\begin{equation*}
    \{ \overrightarrow{u} = r \overrightarrow{x} + q\overrightarrow{\alpha}\; : \; \overrightarrow{x} \in \breve{R}_q^K,\; \overrightarrow{\alpha} \in \breve{R}_r^K  \}
\end{equation*}
is bijective to $\breve{R}_{qr}^K$ and so is
\begin{equation*}
    \{ \overrightarrow{v} = r \overrightarrow{y} + q\overrightarrow{\beta}\; : \; \overrightarrow{y} \in \breve{R}_q^E,\; \overrightarrow{\beta} \in \breve{R}_r^E  \}
\end{equation*}
to $\breve{R}_{qr}^E$. Therefore
\begin{multline*}
    \sum_{z_0 = 1}^q \sum_{\xi_0 = 1}^r \sum_{\overrightarrow{x} \in \breve{R}_q^{K}} \sum_{\overrightarrow{\alpha} \in \breve{R}_r^{K}} \sum_{\overrightarrow{y} \in \breve{R}_q^{E}} \sum_{\overrightarrow{\beta} \in \breve{R}_r^{E}}e\left( \frac{a}{q}F(\overrightarrow{z}) + \frac{b}{r}F(\overrightarrow{\xi}) \right)\\
    = \sum_{z_0=1}^{q} \sum_{\xi_0=1}^{r}e\left( \frac{a}{q}s_0^{k_0} + \frac{b}{r}s_0^{k_0} \right) \sum_{\overrightarrow{x} \in \breve{R}_{q}^K} \sum_{\overrightarrow{\alpha} \in \breve{R}_{r}^K} e\left( \frac{a}{q}N_K(\overrightarrow{u}) + \frac{b}{r}N_K(\overrightarrow{u}) \right)\\
    \quad\quad \sum_{\overrightarrow{y} \in \breve{R}_{q}^E} \sum_{\overrightarrow{\beta} \in \breve{R}_{r}^E} e\left( \frac{a}{q}N_E(\overrightarrow{v}) + \frac{b}{r}N_E(\overrightarrow{v}) \right),\\
\end{multline*}
which is $\sum_{\overrightarrow{s}\in \breve{R}_{qr}} e\left( \frac{ar+bq}{qr}F(\overrightarrow{s}) \right) = \breve{S}_{ar+bq,qr}$.
\qed\end{proof}
An immediate implication of Lemma~\ref{lem_S_aq_S_br_S_arbq_qr} is the following.
\begin{cor}\label{cor_An_q_An_r__An_qr}
If $(q,r) = 1$, then $\breve{A}_n(q)\breve{A}_n(r) = \breve{A}_n(qr)$.
\end{cor}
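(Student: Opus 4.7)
The plan is to expand both factors directly from the definition of $\breve{A}_n$, collapse the product of exponential sums using Lemma~\ref{lem_S_aq_S_br_S_arbq_qr}, and then recognize the resulting sum as $\breve{A}_n(qr)$ via the Chinese Remainder Theorem reparametrization.

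More concretely, I will start by writing
\[
\breve{A}_n(q)\breve{A}_n(r) = \sum_{a\text{ mod* }q}\sum_{b\text{ mod* }r} \frac{\breve{S}_{a,q}\breve{S}_{b,r}}{(qr)^{1+k_1+k_2}}\, e\!\left(-\frac{a}{q}n - \frac{b}{r}n\right).
\]
Since $(q,r)=1$, Lemma~\ref{lem_S_aq_S_br_S_arbq_qr} gives $\breve{S}_{a,q}\breve{S}_{b,r} = \breve{S}_{ar+bq,\,qr}$, while the exponential factor combines as
\[
e\!\left(-\tfrac{a}{q}n - \tfrac{b}{r}n\right) = e\!\left(-\tfrac{ar+bq}{qr}n\right).
\]
So I only need to set $c := ar+bq$ and check that, as $(a,b)$ ranges over $(\mathbb{Z}/q\mathbb{Z})^{\ast}\times(\mathbb{Z}/r\mathbb{Z})^{\ast}$, the residue $c$ ranges bijectively over $(\mathbb{Z}/qr\mathbb{Z})^{\ast}$.

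That last bijection is the standard CRT verification: since $(q,r)=1$, the map $(a\bmod q,\,b\bmod r)\mapsto (ar+bq\bmod qr)$ is the inverse of the usual reduction map, and $c$ is coprime to $qr$ iff $c\equiv ar \pmod{q}$ is coprime to $q$ (equivalent to $(a,q)=1$) and $c\equiv bq\pmod r$ is coprime to $r$ (equivalent to $(b,r)=1$). Substituting, the double sum becomes
\[
\sum_{c\text{ mod* }qr} \frac{\breve{S}_{c,qr}}{(qr)^{1+k_1+k_2}}\, e\!\left(-\frac{c}{qr}n\right) = \breve{A}_n(qr),
\]
which is the desired identity.

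I do not foresee a genuine obstacle here; the only subtle input is Lemma~\ref{lem_S_aq_S_br_S_arbq_qr}, which has already been established and takes care of the interaction between the restricted representative systems $\breve{R}_q$ and $\breve{R}_r$. Everything else is a formal consequence of multiplicativity and CRT, so the corollary amounts to packaging the lemma correctly.
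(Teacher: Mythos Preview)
Your argument is correct and is exactly the ``immediate implication'' the paper has in mind: expand the product, apply Lemma~\ref{lem_S_aq_S_br_S_arbq_qr}, and use the CRT bijection $(a,b)\mapsto ar+bq$ between $(\mathbb{Z}/q\mathbb{Z})^{\ast}\times(\mathbb{Z}/r\mathbb{Z})^{\ast}$ and $(\mathbb{Z}/qr\mathbb{Z})^{\ast}$. There is nothing to add.
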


Observe that we obviously have $\frac{\kappa(q)^{1+k_1+k_2}}{|\breve{R}_{\kappa(q)}|} \frac{\kappa(r)^{1+k_1+k_2}}{|\breve{R}_{\kappa(r)}|} = \frac{\kappa(qr)^{1+k_1+k_2}}{|\breve{R}_{\kappa(qr)}|}$ for $(q,r)=1$. Hence if we assume the convergence of $\lim_{t \rightarrow \infty} \breve{\frak{S}}(t,n) = \breve{\frak{S}}(\infty,n)$ which will be established later, we get
\begin{equation*}
    \breve{\frak{S}}(\infty,n) = \prod_{p \in \mathbb{P}} \breve{\chi}_n(p).
\end{equation*}

Let $\breve{M}_n(q)$ be the number of solutions to $F(\overrightarrow{z}) \equiv n$ (mod $q$) with $\overrightarrow{z} \in \breve{R}_q$.

\begin{lem}\label{lem_An_q_to_Mn_q}
For $L \geq 1$
\begin{equation*}
    \frac{\breve{M}_n(p^L)}{p^{(k_1 + k_2)L}} = \frac{|\breve{R}_p|}{p^{1+k_1+k_2}} + \breve{A}_n(p) + \breve{A}_n(p^2) + \cdots + \breve{A}_n(p^L).
\end{equation*}

\end{lem}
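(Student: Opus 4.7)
The natural approach is standard orthogonality combined with the multiplicative structure of $\breve{R}_{p^L}$ over $\breve{R}_{p^l}$. The plan is to start from the identity
\begin{equation*}
\breve{M}_n(p^L) = \frac{1}{p^L} \sum_{\vec{z} \in \breve{R}_{p^L}} \sum_{a=0}^{p^L - 1} e\!\left( \frac{a(F(\vec{z}) - n)}{p^L}\right),
\end{equation*}
and to group the outer sum over $a$ according to $\gcd(a, p^L)$: write $a = p^{L-l} a'$ with $0 \leq l \leq L$ and $(a', p^l) = 1$ when $l \geq 1$, so the exponential collapses to $e(a'(F(\vec{z})-n)/p^l)$. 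The $l = 0$ (i.e., $a = 0$) contribution is simply $|\breve{R}_{p^L}|$.

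The key bookkeeping step is to verify that $\breve{R}_{p^L}$ is a $p^{(L-l)(1+k_1+k_2)}$-fold cover of $\breve{R}_{p^l}$. This comes from two observations: first, the defining condition $N_K(\vec{x}) \not\equiv 0 \pmod{p}$ on $\breve{R}_{p^L}^K$ depends only on $\vec{x}$ modulo $p$, so each class in $\breve{R}_{p^l}^K$ has exactly $p^{(L-l)k_1}$ lifts to $\breve{R}_{p^L}^K$; similarly for $E$; and $z_0$ runs freely with multiplicity $p^{L-l}$. Consequently,
\begin{equation*}
\sum_{\vec{z} \in \breve{R}_{p^L}} e\!\left(\frac{a' F(\vec{z})}{p^l}\right) = p^{(L-l)(1+k_1+k_2)} \breve{S}_{a', p^l}.
\end{equation*}

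Plugging this back in and comparing with the definition $\breve{A}_n(p^l) = \sum_{a' \text{ mod}^* p^l} \frac{\breve{S}_{a',p^l}}{(p^l)^{1+k_1+k_2}} e(-a' n/p^l)$, the $l$-th term becomes $p^{L(1+k_1+k_2)}\breve{A}_n(p^l)$. After dividing by $p^L$ one gets $p^{L(k_1+k_2)}\breve{A}_n(p^l)$. Together with $|\breve{R}_{p^L}| = p^{(L-1)(1+k_1+k_2)} |\breve{R}_p|$ for the $a = 0$ term (which yields $p^{L(k_1+k_2)} \cdot |\breve{R}_p|/p^{1+k_1+k_2}$), dividing through by $p^{L(k_1+k_2)}$ produces exactly the claimed formula.

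The only part that requires care — and which I would expect to be the main obstacle in writing this up cleanly — is the covering count $|\breve{R}_{p^L}| = p^{(L-1)(1+k_1+k_2)}|\breve{R}_p|$ and, more importantly, the compatibility of the restriction condition $N_K(\vec{x}) \not\equiv 0 \pmod p$ with reduction from $p^L$ to $p^l$. Once this compatibility is stated carefully, the rest is a routine reorganization of the orthogonality sum and a cancellation of powers of $p$.
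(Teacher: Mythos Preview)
Your proposal is correct and follows essentially the same argument as the paper: both use orthogonality to write $\breve{M}_n(p^L)$ as a double sum, group the additive characters by $\gcd(a,p^L)$, and then invoke the fact that the restriction defining $\breve{R}_{p^L}$ depends only on residues modulo $p$ to collapse each group to a multiple of $\breve{S}_{a',p^l}$. The paper states the covering identity $|\breve{R}_{p^m}|/p^{m(1+k_1+k_2)} = |\breve{R}_p|/p^{1+k_1+k_2}$ up front exactly as you do, so your ``main obstacle'' is already the paper's first line.
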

\begin{proof}
Let $q = p^L$. It is obvious that $\breve{R}_{p^m}$ can be constructed from $\breve{R}_p$ for any $m \geq 1$, namely
\begin{equation*}
    \breve{R}_{p^m} := \{ \overrightarrow{z}\text{ mod $p^m$}\; : \; \overrightarrow{z} \equiv \breve{R}_p \}
\end{equation*}
so that $\frac{|\breve{R}_{p^m}|}{p^{m(1+k_1+k_2)}} = \frac{|\breve{R}_{p}|}{p^{1+k_1+k_2}}$.
For $d = (a,q) = p^l$, $l \leq L$, write
\begin{equation*}
    \breve{S}_{\frac{a}{d}\cdot d, \frac{q}{d}\cdot d }  = \sum_{\overrightarrow{z} \in \breve{R}_q } e\left( \frac{a/d}{q/d}F(\overrightarrow{z}) \right) = \begin{cases}  d^{1+k_1+k_2}\breve{S}_{a/d,q/d} & \text{if $d \neq q$}\\
    |\breve{R}_q| & \text{ if $d=q$}.
    \end{cases}
\end{equation*}
It follows that
\begin{align*}
    \breve{M}_n(q) &= \sum_{\overrightarrow{z} \in \breve{R}_q} \frac{1}{q} \sum_{a=1}^q e\left( \frac{a}{q}(F(\overrightarrow{z})-n) \right)\\
    &= \frac{1}{q} \sum_{d|q} \sum_{\substack{a=1\\(a,q)=d}}^q \breve{S}_{\frac{a}{d}\cdot d, \frac{q}{d}\cdot d } e\left(  - \frac{a/d}{q/d}n \right)\\
    &= \frac{1}{q} \left( \sum_{l=0}^{L-1} \sum_{\substack{a=1\\(a,q)=p^l}}^{q} p^{l(1+k_1+k_2)} \breve{S}_{a/{p^l}, p^{L-l}} e\left( -\frac{a/{p^l}}{p^{L-l}}n \right) + |\breve{R}_q| \right)\\
    &= q^{k_1 + k_2} \left( \sum_{l=0}^{L-1} \sum_{\substack{a=1\\(a,q)=p^l}}^q \frac{\breve{S}_{a/{p^l}, p^{L-l}}}{(p^{L-l})^{1+k_1+k_2}}  e\left( -\frac{a/{p^l}}{p^{L-l}}n \right)   + \frac{|\breve{R}_q|}{q^{1+k_1+k_2}}  \right)\\
    &= q^{k_1 + k_2} \left( \sum_{l=1}^L \sum_{a\text{ mod*}p^l}\frac{\breve{S}_{a,p^l}}{p^{l(1+k_1+k_2)}} e\left( -\frac{a}{p^l}n \right) + \frac{|\breve{R}_q|}{q^{1+k_1+k_2}}  \right)\\
    &= q^{k_1 + k_2} \left( \frac{|\breve{R}_p|}{p^{1+k_1+k_2}} + \breve{A}_n(p) +\breve{A}_n(p^2) + \cdots + \breve{A}_n(p^L) \right).
\end{align*}
\qed\end{proof}

The estimation of $\breve{A}_n(p^L)$ for $p \in \breve{\mathbb{P}}$ uses next lemma.
\begin{lem}\label{lem_Hensel lifting}
Let $f(\overrightarrow{z})$ be an integral polynomial in $t+1$ variables for which $\overrightarrow{\zeta} = (\zeta_0, \zeta_1,\cdots,\zeta_{t})$ is a solution to $f(\overrightarrow{z}) \equiv n$ (mod $p^M$). Assume that there is an $i$ such that $u_i = v_p \left( \frac{\partial f(\overrightarrow{z})}{\partial z_i}|_{\overrightarrow{z} = \overrightarrow{\zeta}}\right) \leq \frac{M-1}{2}$. Let $N_i(M)$ be the number of solutions $\overrightarrow{\xi}$ modulo $p^M$ such that $\xi_j \equiv \zeta_j$ (mod $p^{2u_i+1}$) for $j \neq i$, $\xi_i \equiv \zeta_i$ (mod $p^{u_i+1}$) and $f(\overrightarrow{\xi}) \equiv n$ (mod $p^M$). Then $N_i(M) = p^{t(M-2u_i-1) + u_i}$.
\end{lem}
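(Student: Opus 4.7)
The plan is to fix the components $\xi_j$ for $j \neq i$ and reduce to the classical one-variable Hensel lemma in $\xi_i$. For each $j \neq i$, the constraint $\xi_j \equiv \zeta_j \pmod{p^{2u_i+1}}$ gives $p^{M-2u_i-1}$ residues modulo $p^M$, contributing a factor of $p^{t(M-2u_i-1)}$. It then suffices to show that for any such admissible choice of the other components, the number of $\xi_i$ modulo $p^M$ satisfying both $\xi_i \equiv \zeta_i \pmod{p^{u_i+1}}$ and $f(\overrightarrow{\xi}) \equiv n \pmod{p^M}$ is exactly $p^{u_i}$.

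For the one-variable step, let $g(x) := f(\xi_0,\dots,\xi_{i-1},x,\xi_{i+1},\dots,\xi_t) - n$ and let $\overrightarrow{\xi}^{(0)}$ denote $\overrightarrow{\xi}$ with its $i$th entry replaced by $\zeta_i$. Taylor expanding $f$ around $\overrightarrow{\zeta}$ and using that every $\xi_j - \zeta_j$ with $j \neq i$ is divisible by $p^{2u_i+1}$, I would verify the Hensel hypotheses at $x = \zeta_i$: first, $g(\zeta_i) = f(\overrightarrow{\xi}^{(0)}) - n \equiv f(\overrightarrow{\zeta}) - n \equiv 0 \pmod{p^{2u_i+1}}$, since $M \geq 2u_i+1$ by hypothesis; and second, $\partial f/\partial z_i|_{\overrightarrow{\xi}^{(0)}} \equiv \partial f/\partial z_i|_{\overrightarrow{\zeta}} \pmod{p^{2u_i+1}}$, so that $v_p(g'(\zeta_i)) = u_i$ since $u_i < 2u_i+1$.

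The strong form of Hensel's lemma then produces a unique $\tilde{x} \in \mathbb{Z}_p$ with $g(\tilde{x}) = 0$ and $\tilde{x} \equiv \zeta_i \pmod{p^{u_i+1}}$. The admissible $\xi_i$ modulo $p^M$ are precisely those with $\xi_i \equiv \tilde{x} \pmod{p^{M-u_i}}$: using $\tilde{x} \equiv \zeta_i \pmod{p^{u_i+1}}$, any admissible $\xi_i$ satisfies $v_p(\xi_i - \tilde{x}) \geq u_i+1$, so the Taylor identity $g(\xi_i) = g'(\tilde{x})(\xi_i - \tilde{x}) + O((\xi_i - \tilde{x})^2)$ together with $v_p(g'(\tilde{x})) = u_i$ gives $v_p(g(\xi_i)) = u_i + v_p(\xi_i - \tilde{x})$, so $g(\xi_i) \equiv 0 \pmod{p^M}$ is equivalent to $v_p(\xi_i - \tilde{x}) \geq M - u_i$. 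This yields $p^{M-(M-u_i)} = p^{u_i}$ solutions and, combined with the $p^{t(M-2u_i-1)}$ free choices for the other coordinates, the claimed total $N_i(M) = p^{t(M-2u_i-1)+u_i}$.

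The main technical obstacle is the careful bookkeeping of $p$-adic valuations, in particular verifying that perturbing $\overrightarrow{\zeta}$ to $\overrightarrow{\xi}^{(0)}$ preserves $v_p(\partial f/\partial z_i)$; this is precisely why the other coordinates are required to agree with $\zeta_j$ modulo $p^{2u_i+1}$ rather than merely $p^{u_i+1}$. Beyond this, the argument is a routine one-dimensional Hensel lift.
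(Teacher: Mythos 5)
Your proof is correct and follows essentially the same approach as the paper: after fixing $\xi_j$ for $j \neq i$ with $\xi_j \equiv \zeta_j \pmod{p^{2u_i+1}}$ (contributing the factor $p^{t(M-2u_i-1)}$), both arguments reduce to a one-variable Hensel lift in $\xi_i$, using the same valuation bookkeeping to show that exactly $p^{u_i}$ residues of $\xi_i$ modulo $p^M$ work. The only cosmetic difference is that the paper constructs the lift digit by digit modulo $p^{2u_i+2}, p^{2u_i+3}, \ldots$, whereas you invoke the strong form of Hensel's lemma to produce the exact $\mathbb{Z}_p$-root $\tilde{x}$ and then count residues near $\tilde{x}$.
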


\begin{proof}
Without loss of generality, assume $i=0$ and let $\Delta_0 = \frac{\partial f(\overrightarrow{z})}{\partial z_0} \mid_{\overrightarrow{z} = \overrightarrow{\zeta}}$, $\Delta_0(a,b,c,\cdots) = \frac{\partial f(\overrightarrow{z})}{\partial z_0} \mid_{\overrightarrow{z} = (\zeta_0 + p^{u_0 + 1}a + p^{u_0 + 2}b + \cdots, \zeta_1,\cdots,\zeta_t)}$.  Note that
\begin{equation*}
    v_p(\Delta_0(a,b,c,\cdots)) = u_0
\end{equation*}
for all $a,b,c,\cdots \in \mathbb{Z}$. The Hensel's lemma can be applied to this situation in a form that starts from a higher power of $p$:
\begin{align*}
    &f(\zeta_0 + p^{u_0 + 1} z_0, \zeta_1,\cdots,\zeta_t) \equiv f(\overrightarrow{\zeta}) + \Delta_0 p^{u_0+1}z_0 \; \text{ mod \;$p^{2 u_0 + 2}$},\\
    &f(\zeta_0 + p^{u_0 + 1} z_0 + p^{u_0 + 2} z_0',\zeta_1,\cdots,\zeta_t)\\
    &\quad \equiv f(\zeta_0 + p^{u_0 + 1} z_0, \zeta_1,\cdots,\zeta_t) + \Delta_0(z_0) p^{u_0 + 2} z_0' \; \text{ mod \; $p^{2 u_0 + 3}$},
\end{align*}
and this process goes on. Thus for any $\xi_1,\cdots,\xi_t$ modulo $p^M$ with $\xi_j \equiv \zeta_j$ (mod $p^{2u_0+1}$), there exists a unique $r$ modulo $p^{M-2u_0-1}$ such that
\begin{equation*}
    f(\zeta_0 + p^{u_0 + 1} r, \xi_1,\cdots,\xi_t) \equiv n  \; \text{ mod \;$p^M$}.
\end{equation*}
Observe that this $r$ can be lifted to a number modulo $p^{M-u_0-1}$ in $p^{u_0}$ distinct ways. The value of $N_0(M)$ easily follows by counting the choices of $\xi_1,\cdots,\xi_t$ and $\zeta_0 + p^{u_0 + 1} r$ modulo $p^M$.
\qed\end{proof}

Let $\gamma= \gamma(p) = \min\{ v_p(k_1), v_p(k_2)\}$.
\begin{lem}\label{lem_An_p_Bad primes}
If $p \in \breve{\mathbb{P}}$ then $\breve{A}_n(p^L) = 0$ for all $L > 2\gamma+1$.
\end{lem}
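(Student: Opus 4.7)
My plan is to prove the stronger claim that $\breve{S}_{a,p^L} = 0$ for every $(a,p)=1$ whenever $L \geq 2\gamma+2$, which immediately forces $\breve{A}_n(p^L) = 0$ from its defining formula. The core idea is a Hensel-type substitution that decouples the inner variables in $\breve{S}_{a,p^L}$, combined with Euler's identity applied to the homogeneous norm forms $N_K$ and $N_E$. This avoids directly invoking Lemma~\ref{lem_Hensel lifting} in its original form, but is morally the same lifting principle on the Fourier side.

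First I would write each $\overrightarrow{z} \in \breve{R}_{p^L}$ uniquely as $\overrightarrow{z} = \overrightarrow{\eta} + p^{L-\gamma-1}\overrightarrow{w}$ with $\overrightarrow{\eta} \in \breve{R}_{p^{L-\gamma-1}}$ and $\overrightarrow{w}$ ranging over $(\mathbb{Z}/p^{\gamma+1}\mathbb{Z})^{1+k_1+k_2}$; the $\breve{R}$-condition depends only on the reduction mod $p$, so it passes cleanly through $\overrightarrow{\eta}$. Because $L \geq 2\gamma+2$ gives $2(L-\gamma-1) \geq L$, the Taylor remainder past the linear term vanishes mod $p^L$, yielding
\[
F(\overrightarrow{z}) \equiv F(\overrightarrow{\eta}) + p^{L-\gamma-1} \sum_{j=0}^{k_1+k_2} w_j \frac{\partial F}{\partial z_j}(\overrightarrow{\eta}) \pmod{p^L}.
\]
Substituting this into $\breve{S}_{a,p^L}$ factorises the exponential sum as
\[
\sum_{\overrightarrow{\eta}\in\breve{R}_{p^{L-\gamma-1}}} e\!\left(\frac{aF(\overrightarrow{\eta})}{p^L}\right) \prod_{j=0}^{k_1+k_2} \sum_{w_j=0}^{p^{\gamma+1}-1} e\!\left(\frac{a w_j}{p^{\gamma+1}} \frac{\partial F}{\partial z_j}(\overrightarrow{\eta})\right),
\]
and each inner geometric sum equals $p^{\gamma+1}$ or $0$ according as $p^{\gamma+1}$ does or does not divide $(\partial F/\partial z_j)(\overrightarrow{\eta})$ (using $(a,p)=1$). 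Thus the whole product is nonzero only if $p^{\gamma+1}$ divides \emph{every} partial of $F$ at $\overrightarrow{\eta}$.

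The final step rules this out via Euler's identity. If $p^{\gamma+1} \mid \partial_{x_i}N_K(\overrightarrow{\eta}_x)$ for all $i$, then the identity $\sum_i x_i\,\partial_{x_i} N_K(\overrightarrow{x}) = k_1 N_K(\overrightarrow{x})$, combined with $N_K(\overrightarrow{\eta}_x) \not\equiv 0 \pmod p$ (which is exactly the defining property of $\breve{R}_{p^{L-\gamma-1}}^K$ for the bad prime $p\in\breve{\mathbb{P}}$), forces $v_p(k_1) \geq \gamma+1$; the parallel argument on the $E$-side forces $v_p(k_2) \geq \gamma+1$. These two inequalities contradict $\gamma = \min\{v_p(k_1),v_p(k_2)\}$, so the bracketed product always vanishes and $\breve{S}_{a,p^L}=0$, which gives the lemma. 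The main subtlety is the delicate calibration of the split: $L-\gamma-1$ must be large enough to kill the quadratic Taylor remainder (this is exactly the threshold $L \geq 2\gamma+2$), while the matching exponent $\gamma+1$ in the inner geometric sum is precisely the strength at which Euler's identity can defeat the hypothesis $\gamma = \min\{v_p(k_1),v_p(k_2)\}$.
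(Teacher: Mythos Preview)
Your proof is correct. Both arguments hinge on Euler's identity for the homogeneous forms $N_K$ and $N_E$, but the paper works on the solution-counting side while you work directly on the exponential-sum side. Specifically, the paper uses the relation $p^{(k_1+k_2)L}\breve{A}_n(p^L) = \breve{M}_n(p^L) - p^{k_1+k_2}\breve{M}_n(p^{L-1})$ coming from Lemma~\ref{lem_An_q_to_Mn_q}, applies Euler's identity (after assuming WLOG $\gamma=v_p(k_1)$) to guarantee that every $\overrightarrow{z}\in\breve{R}_{p^m}$ with $F(\overrightarrow{z})\equiv n\pmod{p^m}$ has some $x_i$-partial of valuation $\leq\gamma$, and then invokes Lemma~\ref{lem_Hensel lifting} to conclude that all solutions at levels $L$ and $L-1$ are lifts from level $2\gamma+1$, so the two counts differ by exactly the factor $p^{k_1+k_2}$. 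Your Taylor-expansion-plus-geometric-sum argument instead proves the stronger fact that each individual $\breve{S}_{a,p^L}$ vanishes; this is the Fourier dual of the paper's cancellation, and it has the advantage of being self-contained (no appeal to Lemma~\ref{lem_An_q_to_Mn_q} or Lemma~\ref{lem_Hensel lifting}). The paper's route, by contrast, reuses machinery already set up and makes the Hensel-lifting picture more explicit.
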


\begin{proof}
Note that $p^{(k_1+k_2)L}\breve{A}(p^L) = \breve{M}_n(p^L) - p^{k_1+k_2}\breve{M}_n(p^{L-1})$ for $L \geq 2$. We show that all of the solutions that are counted in $\breve{M}_n(p^L)$ and $\breve{M}_n(p^{L-1})$ cancel out if $L \geq 2\gamma + 2$. Assume $\gamma = v_p(k_1)$. Suppose $\overrightarrow{z} = (z_0,\overrightarrow{x},\overrightarrow{y}) \in \breve{R}_{p^m}$ is a solution counted in $\breve{M}_n(p^m)$ where $p \in \breve{\mathbb{P}}$. Let $G = Gal(K|\mathbb{Q})$. Then $N_K(\overrightarrow{x}) = \prod_{\sigma \in G}\left( x_1 \varphi_1^{\sigma} + \cdots +  x_{k_1} \varphi_{k_1}^{\sigma}  \right)$, and we have
\begin{align*}
    \frac{\partial N_K(\overrightarrow{x})}{\partial x_i} &= \sum_{\sigma \in G} \varphi_i^{\sigma} \prod_{\substack{\tau \in G \\ \tau \neq \sigma}}\left( x_1 \varphi_1^{\tau} + \cdots +  x_{k_1} \varphi_{k_1}^{\tau}  \right)\\
    &= Tr_{K|\mathbb{Q}} \left( \varphi_i \prod_{\substack{\sigma \in G \\ \sigma \neq id}}\left( x_1 \varphi_1^{\sigma} + \cdots +  x_{k_1} \varphi_{k_1}^{\sigma}  \right) \right)\\
    &= N_K(\overrightarrow{x}) Tr_{K|\mathbb{Q}}\left( \frac{\varphi_i}{x_1 \varphi_1 + \cdots x_{k_1} \varphi_{k_1}} \right).
\end{align*}
Hence
\begin{equation*}
    x_1 \frac{\partial N_K(\overrightarrow{x})}{\partial x_1} + \cdots + x_{k_1} \frac{\partial N_K(\overrightarrow{x})}{\partial x_{k_1}} = N_K(\overrightarrow{x}) Tr_{K|\mathbb{Q}}(1) = {k_1} N_K(\overrightarrow{x}).
\end{equation*}
But $\overrightarrow{x} \in \breve{R}_{p^m}^K$, i.e., $N_K(\overrightarrow{x}) \not\equiv 0$ (mod $p$) which implies $x_i \frac{\partial N_K(\overrightarrow{x})}{\partial x_i} \not\equiv 0$ (mod $p^{\gamma+1}$) for some $i$. As described in the proof of Lemma~\ref{lem_Hensel lifting}, this solution $\overrightarrow{z}$ is one of the lifts of $\overrightarrow{\zeta} \in \breve{R}_{p^{2\gamma+1}}$ when $m \geq 2 \gamma + 1$ and so the solutions $\overrightarrow{z} \in \breve{R}_{p^{L}}$ and $\overrightarrow{z'} \in \breve{R}_{p^{L-1}}$ that are counted in $\breve{M}_n(p^L)$ and $\breve{M}_n(p^{L-1})$ are all lifts of the solutions $\overrightarrow{z} \in \breve{R}_{p^{2\gamma+1}}$. Lemma~\ref{lem_Hensel lifting} shows that the number of such lifts grows by a factor of $p^{k_1 + k_2}$ for each increment of $m$ in the modulus $p^m$ for $m \geq 2\gamma + 1$. Thus all of them are canceled in $\breve{M}_n(p^L) - p^{k_1+k_2}\breve{M}_n(p^{L-1})$.
\qed\end{proof}

The estimation of $\breve{A}_n(p^L)$ for $p \not\in \breve{\mathbb{P}}$ is a bit more complicated. Contrary to the classical problems in additive number theory for which the exponential sum $S_{a,p^L}$ splits into the product of many exponential sums over single variable, $S_{a,p^L}^K$ does not behave in the same way because $N_K(\overrightarrow{x})$ has many off-diagonal terms of the form $x_1^{a_1}x_2^{a_2}\cdots x_k^{a_k}$. Instead of obtaining a bound of exponential sums over a single variable, therefore, we focus on the properties of the norm map $N_{K|\mathbb{Q}}$ and it is here that the class field theory plays a role. A successful bound for $S_{a,p^L}^K$ comes in following sections.

\section{Algebraic preparation for the singular series}\label{sec_algebraic preparation for the singular series}

For the estimation of the exponential sum $S_{a,q}$, we translate the summands $\overrightarrow{x}$ in $S_{a,p^L}^K$ to a system of well-chosen representatives of the quotient ring $O_K/p^L O_K$.

\begin{lem}\label{lem_down to representatives}
Let $e,f,g$ be the ramification index, inertial degree and decomposition number of $p$ in $K|\mathbb{Q}$ and write $p O_K = \frak{p}_1^e \frak{p}_2^e \cdots \frak{p}_g^e$. Let
\begin{equation*}
    \alpha^{(m)} \in \left( \prod_{\substack{i = 1\\ i \neq m}}^g \frak{p}_i^{eL}\right) \backslash \frak{p}_m
\end{equation*}
and for each $m$ with $1\leq m \leq g$, let $\{ r_1^{(m)},r_2^{(m)},\cdots,r_{p^{efL}}^{(m)} \}$ be a system of representatives of $O_K$ modulo $\frak{p}_m^{eL}$. Then
\begin{equation*}
    S_{a,p^L}^K = \sum_{i_1 = 1}^{p^{efL}}\cdots\sum_{i_g = 1}^{p^{efL}} e\left( \frac{a}{q} N_{K|\mathbb{Q}}(\alpha^{(1)}r_{i_1}^{(1)} + \cdots + \alpha^{(g)}r_{i_g}^{(g)}) \right).
\end{equation*}
\end{lem}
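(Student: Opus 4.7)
The plan is to reinterpret the sum $S_{a,p^L}^K$ as a sum over the quotient ring $O_K/p^L O_K$ and then exploit the Chinese Remainder Theorem decomposition induced by the factorization $pO_K = \mathfrak{p}_1^e \cdots \mathfrak{p}_g^e$. The $\alpha^{(m)}$ are essentially playing the role of CRT idempotents (up to a unit), and $\{r_i^{(m)}\}$ parametrizes one factor at a time.

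First, since $\{\varphi_1, \ldots, \varphi_{k_1}\}$ is an integral basis, the map $(x_1, \ldots, x_{k_1}) \mapsto x_1\varphi_1 + \cdots + x_{k_1}\varphi_{k_1}$ induces a bijection $(\mathbb{Z}/p^L\mathbb{Z})^{k_1} \to O_K/p^L O_K$, so I would first rewrite
\[
    S_{a,p^L}^K = \sum_{\alpha \in O_K/p^L O_K} e\!\left( \tfrac{a}{p^L}\, N_{K|\mathbb{Q}}(\alpha) \right).
\]
Because $N_{K|\mathbb{Q}}(\alpha) \bmod p^L$ depends only on $\alpha \bmod p^L O_K$, the sum is well-defined.

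Next, raising $pO_K = \mathfrak{p}_1^e \cdots \mathfrak{p}_g^e$ to the $L$-th power and invoking CRT gives the ring isomorphism $O_K/p^L O_K \cong \prod_{m=1}^{g} O_K/\mathfrak{p}_m^{eL}$. I would now verify that the map
\[
    \Phi : (r^{(1)}, \ldots, r^{(g)}) \longmapsto \alpha^{(1)} r^{(1)} + \cdots + \alpha^{(g)} r^{(g)} \pmod{p^L O_K}
\]
is a bijection from $\prod_m O_K/\mathfrak{p}_m^{eL}$ onto $O_K/p^L O_K$. For $j \neq m$ we have $\alpha^{(m)} \in \mathfrak{p}_j^{eL}$, so $\Phi(\vec{r})$ reduces modulo $\mathfrak{p}_j^{eL}$ to $\alpha^{(j)} r^{(j)}$. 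Now the defining condition $\alpha^{(m)} \notin \mathfrak{p}_m$ says that $\alpha^{(m)}$ avoids the only maximal ideal of the local ring $O_K/\mathfrak{p}_m^{eL}$, hence $\alpha^{(m)}$ is a unit in that quotient. Therefore the $m$-th component of $\Phi(\vec{r})$ under CRT is $\alpha^{(m)} r^{(m)}$, which ranges over every residue class modulo $\mathfrak{p}_m^{eL}$ exactly once as $r^{(m)}$ does. By CRT, $\Phi$ is a bijection; a cardinality check confirms this since $(p^{efL})^g = p^{efgL} = p^{k_1 L}$.

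Finally, substituting this bijection into the rewritten form of $S_{a,p^L}^K$ yields the claimed identity. There is no genuine obstacle here, only bookkeeping: the one point that deserves care is confirming the unit property of $\alpha^{(m)}$ modulo $\mathfrak{p}_m^{eL}$ (not merely modulo $\mathfrak{p}_m$), which follows because $\mathfrak{p}_m$ is the unique prime of $O_K$ containing $\mathfrak{p}_m^{eL}$.
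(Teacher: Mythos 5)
Your proof is correct and takes essentially the same approach as the paper: both reduce $S_{a,p^L}^K$ to a sum over $O_K/p^L O_K$ (using integrality of $N_K$) and then show that $r^{(1)},\ldots,r^{(g)} \mapsto \sum_m \alpha^{(m)} r^{(m)}$ enumerates a full system of representatives, the key point being that $\alpha^{(m)}$ is invertible in the local ring $O_K/\mathfrak{p}_m^{eL}$. The paper phrases this in terms of non-zero-divisors and builds from the two-ideal case to $g$ ideals rather than invoking CRT by name, but in a finite ring these are the same argument.
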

\begin{proof}
Let $G = Gal(K|\mathbb{Q})$. We first recall that $N_K(\overrightarrow{x}) \in \mathbb{Z}[x_1,\cdots,x_{k_1}]$. Indeed, the coefficient of each term $x_1^{a_1}\cdots x_{k_1}^{a_{k_1}}$ in $N_K(\overrightarrow{x}) = \prod_{\sigma \in G}(x_1 \varphi_1^{\sigma} + \cdots + x_{k_1} \varphi_{k_1}^{\sigma})$ is an algebraic integer which is invariant under every $\sigma \in G$, so it is a rational integer. Hence for any integral vector $\overrightarrow{v}$ that is congruent to $\overrightarrow{x}$ modulo $p^L$ one has $N_K(\overrightarrow{v}) \equiv N_K(\overrightarrow{x})$ (mod $p^L$). Since $x_1 \varphi_1 + \cdots x_{k_1} \varphi_{k_1} \equiv v_1 \varphi_1 + \cdots v_{k_1} \varphi_{k_1}$ (mod $p^L O_K$) if and only if $x_i \equiv v_i$ (mod $p^L$) for all $i$, clearly
\begin{equation*}
    S_{a,p^L}^K = \sum_{\gamma \in \mathcal{R}} e\left( \frac{a}{p^L} N_{K|\mathbb{Q}}(\gamma) \right)
\end{equation*}
for any system $\mathcal{R}$ of representatives of $O_K/p^L O_K$.

We may work on a more general situation like the following. Let $I,J$ be integral ideals of $O_K$ that are relatively prime and $q,r$ be their absolute norms. Note that $q,r$ need not be relatively prime. Let $\{t_1,\cdots,t_q\}$ and $\{u_1,\cdots,u_r\}$ be systems of representatives of $O_K / I$ and $O_K/J$. Suppose we put $v_{i,j} = \beta t_i + \alpha u_j$ for some $\alpha \in I$, $\beta \in J$. Then $v_{i,j} \equiv \beta t_i$ mod $I$, and hence $v_{i,j} \equiv v_{i',j'}$ mod $I$ $\Leftrightarrow$ $\beta (t_i - t_{i'}) \in I$. If $\beta + I$ is not a zero divisor of $O_K/I$, this is equivalent to $t_i \equiv t_{i'}$ mod $I$. Thus $v_{i,j}$ for $1 \leq i \leq q$, $1 \leq j \leq r$ form a system of representatives of $O_K / IJ$ if and only if $\beta + I$ and $\alpha + J$ are not zero divisors in $O_K / I$ and $O_K/J$ respectively.

Because $I$ and $J$ are relatively prime, $I+J = O_K$ and there exist $\alpha \in I$ and $\beta \in J$ such that $\alpha \equiv 1$ mod $J$ and $\beta \equiv 1$ mod $I$. $1+I$ is clearly a unit in $O_K / I$ (in particular, not a zero divisor). The existence of $\alpha$, $\beta$ that satisfies the conditions mentioned above follows from this.

As an obvious generalization, assume $I^{(1)},\cdots,I^{(g)}$ are integral ideals that are relatively prime and let $\nu_m = N(I^{(m)})$. If $\{ r_1^{(m)},r_2^{(m)},\cdots,r_{\nu_m}^{(m)} \}$ is a system of representatives of $O_K/I^{(m)}$, there exist $\alpha^{(1)},\cdots,\alpha^{(g)}$ with $\alpha^{(m)} \in \prod_{\substack{i=1\\i\neq m}}^g I^{(i)}$ for which $\alpha^{(m)}+I^{(m)}$ is not a zero divisor in $O_K/I^{(m)}$ for all $m$. Writing $v_{i_1,\cdots,i_g} = \alpha^{(1)}r_{i_1}^{(1)} + \cdots + \alpha^{(g)}r_{i_g}^{(g)}$, the set
\begin{equation*}
\{ v_{i_1,\cdots,i_g}\;:\;1\leq i_1 \leq \nu_1,\; \cdots,\; 1\leq i_g \leq \nu_g \}
\end{equation*}
forms a system of representatives of $O_K/I^{(1)}\cdots I^{(g)}$. With the substitution $I^{(m)} = \frak{p}_m^{eL}$ and the choice of $\alpha^{(m)}$ as stated in the lemma, $\alpha^{(m)}$ is trivially not a zero divisor of $O_K/\frak{p}_m^{eL}$. This completes the proof.
\qed\end{proof}

The following is a well-known fact in algebraic number theory (for example, see Chapter 8 of ~\cite{Ribenboim}.)
\begin{prop}
Let $\frak{p}$ be a nonzero prime ideal of $O_K$ and $m \geq 1$. Let $\Gamma$ be a system of representatives of $O_K$ modulo $\frak{p}$ containing 0. Let $t \in \frak{p} \backslash \frak{p}^2$. Then $\Delta = \{ s_0 + s_1 t + \cdots + s_{m-1} t^{m-1}\;:\;s_i \in \Gamma \}$ is a system of representatives of $O_K$ modulo $\frak{p}^m$.
\end{prop}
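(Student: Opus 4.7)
My plan is to argue by induction on $m$. The case $m=1$ is immediate since $\Delta = \Gamma$ is by hypothesis a system of representatives modulo $\frak{p}$. For the inductive step from $m-1$ to $m$, the key fact I would establish first is that multiplication by $t^{m-1}$ descends to a group isomorphism
\[
\mu_{m-1}\colon O_K/\frak{p} \;\longrightarrow\; \frak{p}^{m-1}/\frak{p}^m, \qquad \bar{s} \;\longmapsto\; \overline{s\,t^{m-1}}.
\]
Granting this, the inductive hypothesis gives unique $s_0,\ldots,s_{m-2}\in\Gamma$ with $x \equiv s_0 + s_1 t + \cdots + s_{m-2}t^{m-2} \pmod{\frak{p}^{m-1}}$, so the difference lies in $\frak{p}^{m-1}$; applying $\mu_{m-1}^{-1}$ to the class of this difference produces the unique $s_{m-1}\in\Gamma$ for which $x \equiv s_0+\cdots+s_{m-1}t^{m-1} \pmod{\frak{p}^m}$.

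Establishing $\mu_{m-1}$ is the main technical content. Well-definedness is immediate because $s-s'\in\frak{p}$ forces $(s-s')t^{m-1}\in\frak{p}\cdot\frak{p}^{m-1}=\frak{p}^m$. Injectivity uses unique factorization of ideals in the Dedekind domain $O_K$: the hypothesis $t\in\frak{p}\setminus\frak{p}^2$ means $v_{\frak p}(t)=1$, so $s\,t^{m-1}\in\frak{p}^m$ gives $v_{\frak p}(s)+(m-1)\geq m$ and hence $s\in\frak{p}$. Surjectivity is the point I expect to be the main obstacle, and I would reduce it to the ideal identity
\[
(t^{m-1}) + \frak{p}^m \;=\; \frak{p}^{m-1}.
\]
I would verify this identity by computing $v_{\frak{q}}$ of both sides at every nonzero prime $\frak{q}$ of $O_K$, using the fact that for a sum of ideals the valuation is the minimum: at $\frak{q}=\frak{p}$ this is $\min(m-1,m)=m-1$, while at any other $\frak q$ it is $\min(v_{\frak q}(t^{m-1}),\,0)=0$, matching $\frak{p}^{m-1}$ prime-by-prime.

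Finally a counting argument wraps up the proof. The surjectivity step just described shows every class in $O_K/\frak{p}^m$ is represented by some element of $\Delta$, so $|\Delta|\geq N(\frak{p}^m)=N(\frak p)^m=|\Gamma|^m$; since trivially $|\Delta|\leq|\Gamma|^m$, equality holds and the parametrization $(s_0,\ldots,s_{m-1})\mapsto s_0+\cdots+s_{m-1}t^{m-1}$ is a bijection from $\Gamma^m$ to $O_K/\frak{p}^m$, which is precisely the statement of the proposition. Once the identity $(t^{m-1})+\frak{p}^m=\frak{p}^{m-1}$ is in hand, the rest is straightforward bookkeeping, so the only genuine hurdle is the ideal-theoretic step that localizes the picture around $\frak{p}$ without actually invoking localization.
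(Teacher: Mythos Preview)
Your proof is correct. The paper, however, does not supply its own proof: it states the proposition as ``a well-known fact in algebraic number theory'' and refers the reader to Ribenboim. Your inductive argument via the isomorphism $O_K/\frak{p}\cong\frak{p}^{m-1}/\frak{p}^m$ is exactly the standard textbook proof one finds there, so there is nothing to compare.

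One small remark on exposition: the inductive step you describe already yields both existence and uniqueness of the $s_i$, so the counting argument at the end is redundant. It is not wrong, just unnecessary once you have established that $\mu_{m-1}$ is a bijection and invoked the inductive hypothesis.
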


Let $e$, $f$, $g$ be as in Lemma~\ref{lem_down to representatives}. We want to choose a system of representatives of $O_K$ modulo $\frak{p}_m^{eL}$ in a nice way. Consider the ideal class group $\mathcal{C}$ of $K$ and a class $[I] \in \mathcal{C}$ containing $I$. An analogue of Dirichlet's theorem on primes in arithmetic progression is that the prime ideals of $O_K$ are equi-distributed among ideal classes in $\mathcal{C}$ on a probabilistic point of view (Chapter 11 of ~\cite{Murty}). We simply take a weaker form of this for granted that every ideal class contains infinitely many prime ideals.

Firstly, consider $[\frak{p}_1] \in \mathcal{C}$. Then there is a prime ideal $\frak{a}_1$ such that $[\frak{a}_1] = [\frak{p}_1]^{-1}$ in $\mathcal{C}$ and $gcd(\frak{a}_1,\frak{p}_1\frak{p}_2\cdots\frak{p}_g) = O_K$. Now $\frak{p}_1 \frak{a}_1$ is principal, say, $t O_K$ and for $m \neq 1$
\begin{align*}
    \left( \left(\prod_{\substack{i=1\\i \neq m}}^g \frak{p}_i ^{eL}\right) \setminus \frak{p}_m \right) \bigcap t^{eL}O_K &= lcm\left( \prod_{\substack{i=1\\i\neq m}}^g \frak{p}_i ^{eL}, t^{eL}O_K \right) \setminus \frak{p}_m\\
    &= \left(\frak{a}_1^{eL} \prod_{\substack{i=1\\i \neq m}}^g \frak{p}_i ^{eL}\right) \setminus \frak{p}_m\\
    &\neq \emptyset
\end{align*}
whence one can choose $\alpha^{(m)} \in \left( \frak{a}_1^{eL} \prod_{\substack{i=1\\i \neq m}}^g \frak{p}_i ^{eL}\right) \setminus \frak{p}_m$, i.e., $\alpha^{(m)} \in \frak{a}_1^{eL} \frak{p}_1^{eL} = t^{eL}O_K$ and $\alpha^{(m)} / t^{eL} \in O_K$ for $m=2,3,\cdots,g$.

Similarly, we can choose $\frak{a}_1$, $\frak{a}_2$, $\ldots$, $\frak{a}_g$ so that $\frak{p}_i \frak{a}_i = t_i O_K$ is principal and $gcd(\frak{a}_i,\frak{p}_1\frak{p}_2\cdots\frak{p}_g) = O_K$ for all $i$. In particular $t_i \in \frak{p}_i \setminus \frak{p}_i^2$ and $v_p(N_{K|\mathbb{Q}}(t_i)) = f$. By the infinitude of the prime ideals in every ideal class, we can in addition assume  $(\frak{a}_i, \frak{a}_j) = O_K$ for $i \neq j$. It follows that there exists an element
\begin{equation*}
    \alpha^{(m)} \in \left( \prod_{\substack{i=1\\i \neq m}}^g \frak{a}_i ^{eL} \frak{p}_i^{eL} \right) \setminus \frak{p}_m
\end{equation*}
for each $m$ so that
\begin{equation*}
    \frac{\alpha^{(m)}}{\prod_{\substack{i=1\\i \neq m}}^g t_i^{eL}} \in O_K \setminus \frak{p}_m.
\end{equation*}

Let $\Gamma^{(1)},\cdots,\Gamma^{(g)}$ be systems of representatives of $O_K / \frak{p}_1, \cdots, O_K / \frak{p}_g$ that contain $0$. In the estimation of $S_{a,p^L}^K$ later, we choose the representatives that are of the form
\begin{equation}\label{eqn_representatives}
    \sum_{m=1}^g \alpha^{(m)} \left( s_0^{(m)} + s_1^{(m)} t_m + s_2^{(m)} t_m^2 + \cdots + s_{eL-1}^{(m)} t_m^{eL-1} \right)
\end{equation}
where $s_j^{(m)} \in \Gamma^{(m)}$.

\section{The singular series: Good primes}\label{sec_singular series II}

Let $\mathcal{R}$ be a system of representatives of $O_K/p^L O_K$ and $\mathcal{R}^{\ast} = \{ r \in \mathcal{R}\;:\; p \nmid N_{K|\mathbb{Q}}(r) \}$. Let $e$, $f$, $g$ be as in Lemma~\ref{lem_down to representatives}. Note that $\breve{S}_{a,p^L}^K = S_{a,p^L}^K$ if $p\not\in \breve{\mathbb{P}}$.
\begin{lem}\label{lem_Seperation of exponential sum}
Write $S_{a,p^L}^K = S_{a,p^L}^{K,1} + S_{a,p^L}^{K,2}$ where
\begin{gather*}
    S_{a,p^L}^{K,1} = \sum_{\gamma \in \mathcal{R}^{\ast}} e\left( \frac{a}{p^L} N_{K|\mathbb{Q}}(\gamma) \right), \quad S_{a,p^L}^{K,2} = \sum_{\gamma \in \mathcal{R} \setminus \mathcal{R}^{\ast}} e\left( \frac{a}{p^L} N_{K|\mathbb{Q}}(\gamma) \right).
\end{gather*}
\begin{enumerate}
  \item If $L \leq f$ then $S_{a,p^L}^{K,2} = \sum_{i=1}^g (-1)^{i-1} {{g}\choose{i}} p^{k_1 L-if}$
  \item If $L>1$ and $p \nmid k_1$ then $S_{a,p^L}^{K,1}=0$
  \item If $L=1$ and $p$ is unramified in $K|\mathbb{Q}$ then $S_{a,p^L}^{K,1}=-\frac{(p^f-1)^g}{p-1}$.
\end{enumerate}
\end{lem}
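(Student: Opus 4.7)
The plan is to treat the three parts separately, since they concern genuinely different regimes.

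For part (1), I would first observe that $\mathcal{R}\setminus\mathcal{R}^{\ast}$ is the image in $O_K/p^LO_K$ of $\bigcup_{m=1}^{g}\frak{p}_m$, because $p\mid N_{K|\mathbb{Q}}(\gamma)$ exactly when some $\frak{p}_m$ divides $(\gamma)$. For any such $\gamma$, the ideal norm gives $p^f\mid N_{K|\mathbb{Q}}(\gamma)$, and under the hypothesis $L\leq f$ this forces $e\!\left(\frac{a}{p^L}N_{K|\mathbb{Q}}(\gamma)\right)=1$. Hence $S_{a,p^L}^{K,2}=|\mathcal{R}\setminus\mathcal{R}^{\ast}|$ and what remains is a counting problem. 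Inclusion-exclusion on the subset $S\subseteq\{1,\ldots,g\}$ of primes containing $\gamma$ is clean because the ideals $I_S=\prod_{m\in S}\frak{p}_m$ all contain $p^LO_K$ (as $eL\geq 1$), so the image of $I_S$ in $O_K/p^LO_K$ has cardinality $p^{k_1L}/N(I_S)=p^{k_1L-|S|f}$. Symmetry over $|S|=i$ delivers the stated formula.

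For part (2), the key device is the substitution $\gamma\mapsto(1+cp^{L-1})\gamma$ for $c\in\mathbb{Z}$, which permutes $\mathcal{R}^{\ast}$ since $1+cp^{L-1}\in\mathbb{Z}^{\ast}_{p^L}$. The crucial input of $L\geq 2$ is the binomial expansion
\[
(1+cp^{L-1})^{k_1}\equiv 1+ck_1p^{L-1}\pmod{p^L},
\]
valid because $2(L-1)\geq L$. Applying this inside $N_{K|\mathbb{Q}}$ gives, for every $c$,
\[
S_{a,p^L}^{K,1}=\sum_{\gamma\in\mathcal{R}^{\ast}}e\!\left(\tfrac{a}{p^L}N_{K|\mathbb{Q}}(\gamma)\right)e\!\left(\tfrac{ack_1}{p}N_{K|\mathbb{Q}}(\gamma)\right).
\]
Summing over $c=0,1,\ldots,p-1$ and pulling the $c$-sum inside produces the inner factor $\sum_{c=0}^{p-1}e(ack_1N_{K|\mathbb{Q}}(\gamma)/p)$, which vanishes for every $\gamma\in\mathcal{R}^{\ast}$ because $p\nmid a$, $p\nmid k_1$ (hypothesis) and $p\nmid N_{K|\mathbb{Q}}(\gamma)$. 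Thus $pS_{a,p^L}^{K,1}=0$.

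For part (3), $p$ unramified means $pO_K=\frak{p}_1\cdots\frak{p}_g$, and the Chinese Remainder Theorem gives $O_K/pO_K\cong\prod_{m=1}^{g}\mathbb{F}_{p^f}$ with $\gamma\leftrightarrow(\gamma_1,\ldots,\gamma_g)$. Under this, $\mathcal{R}^{\ast}$ matches $(\mathbb{F}_{p^f}^{\ast})^g$ and $N_{K|\mathbb{Q}}(\gamma)\equiv\prod_m N_{\mathbb{F}_{p^f}|\mathbb{F}_p}(\gamma_m)\pmod p$. Since $\mathbb{F}_{p^f}^{\ast}$ is cyclic, the local norm onto $\mathbb{F}_p^{\ast}$ is surjective with every fibre of size $(p^f-1)/(p-1)$. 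Reparametrising by $n_m=N_{\mathbb{F}_{p^f}|\mathbb{F}_p}(\gamma_m)\in\mathbb{F}_p^{\ast}$ yields
\[
S_{a,p}^{K,1}=\left(\frac{p^f-1}{p-1}\right)^{\!g}\sum_{n_1,\ldots,n_g\in\mathbb{F}_p^{\ast}}e\!\left(\tfrac{a}{p}n_1\cdots n_g\right).
\]
Fixing $n_2,\ldots,n_g$ and letting $n_1$ vary, the innermost sum collapses to $-1$, leaving $-(p-1)^{g-1}$ after summing over the remaining $g-1$ variables; multiplying by the prefactor gives the claimed $-(p^f-1)^g/(p-1)$.

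The main obstacle is the modular arithmetic in part (2): one must be careful that the congruence $(1+cp^{L-1})^{k_1}\equiv 1+ck_1p^{L-1}\pmod{p^L}$ needs both $L\geq 2$ (so that $2(L-1)\geq L$ kills the quadratic tail) and $p\nmid k_1$ (so that the resulting factor $e(ack_1N_{K|\mathbb{Q}}(\gamma)/p)$ is a nontrivial additive character in $c$). The three hypotheses on $L$, $p$, and $k_1$ are used precisely here, so the proof hinges on this single $p$-adic expansion.
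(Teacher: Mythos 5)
Your proof is correct in all three parts. Parts (1) and (3) are essentially the paper's argument: (1) counts nonunits of $O_K/p^LO_K$ --- you do it by inclusion--exclusion over the ideals $\prod_{m\in S}\frak{p}_m\supseteq p^LO_K$, the paper by subtracting the unit count and applying the binomial theorem, which is the same computation; (3) in both cases comes down to the equidistribution of the norm on $(O_K/pO_K)^{\ast}$ over $\mathbb{F}_p^{\ast}$, which you justify via CRT and surjectivity of the finite-field norm, the paper by the same observation phrased as ``$N_{K|\mathbb{Q}}(ru)$ for $u\in\mathcal{R}^{\ast}$ ranges evenly over the nonzero residues.''

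Part (2) is where you take a genuinely different route. The paper invokes its Hensel-lifting Lemma~\ref{lem_Hensel lifting}: since $p\nmid k_1$ forces some partial derivative $\partial N_K/\partial x_i\not\equiv 0\ (\mathrm{mod}\ p)$, for each residue class $\overrightarrow{x}$ mod $p$ with $p\nmid N_K(\overrightarrow{x})$ the value $N_K$ is equidistributed mod $p^L$ among residues congruent to $N_K(\overrightarrow{x})$ mod $p$, whence the sum factors through a complete inner sum $\sum_{z=1}^{p^{L-1}}e(a(N_K(\overrightarrow{x})+pz)/p^L)=0$. Your argument instead exploits the multiplicative structure directly: averaging over the scaling $\gamma\mapsto(1+cp^{L-1})\gamma$, which permutes the unit classes, and using the binomial congruence $(1+cp^{L-1})^{k_1}\equiv 1+ck_1p^{L-1}\ (\mathrm{mod}\ p^L)$ for $L\geq 2$ to extract a nontrivial additive character $\sum_{c}e(ack_1N_K(\gamma)/p)=0$. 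This is shorter, avoids the auxiliary lifting lemma entirely, and makes it transparent exactly where the hypotheses $L\geq 2$ and $p\nmid k_1$ enter. It is also somewhat more robust: the paper's route requires the nonvanishing-derivative fact proved inside Lemma~\ref{lem_An_p_Bad primes}, whereas yours needs only that $1+cp^{L-1}$ is a unit of $\mathbb{Z}/p^L\mathbb{Z}$ and the Euler identity $N_K(u\gamma)=u^{k_1}N_K(\gamma)$ for $u\in\mathbb{Z}$. The trade-off is that the paper's lifting lemma is needed elsewhere anyway (in Lemma~\ref{lem_An_p_Bad primes}), so the paper reuses machinery already in place, while your version is the cleaner self-contained derivation.
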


\begin{proof}
Assume $L \leq f$. Since $p \mid N_{K|\mathbb{Q}}(\gamma)$ if and only if $p^f \mid N_{K|\mathbb{Q}}(\gamma)$, in this case $S_{a,p^L}^{K,2}$ merely counts the number of nonunits of $O_K / p^L O_K$. Let $T$ be the number of nonunits in $O_K / p O_K$. The number of units in $O_K/p O_K$ is $\left( p^{ef} - p^{(e-1)f} \right)^g$, so
\begin{align*}
    T &= p^{k_1} - \left( p^{ef} - p^{(e-1)f} \right)^g = p^{k_1} - p^{efg} \left( 1-p^{-f} \right)^g\\
    &=\sum_{i=1}^g (-1)^{i-1} {{g}\choose{i}} p^{k_1-if}
\end{align*}
and $S_{a,p^L}^{K,2} = p^{(L-1)k_1}T = \sum_{i=1}^g (-1)^{i-1} {{g}\choose{i}} p^{k_1 L-if}$.

As for $S_{a,p^L}^{K,1}$, first assume $L > 1$ and $p \nmid k_1$. Let $\overrightarrow{x}$ be given by $\gamma = \overrightarrow{x}\cdot\overrightarrow{\varphi}$. Since $p \nmid k_1$, $p \nmid N_K(\overrightarrow{x})$ implies that there exists $i$ such that $\frac{\partial N_K(\overrightarrow{x})}{\partial x_i} \not\equiv 0$ (mod $p$) as shown in the proof of Lemma~\ref{lem_An_p_Bad primes}. If $m$ is any integer such that $m \equiv N_K(\overrightarrow{x})$ (mod $p$), Lemma~\ref{lem_Hensel lifting} shows that the number of $\overrightarrow{v}$ modulo $p^{L-1}$ satisfying $N_K(\overrightarrow{x} + p \overrightarrow{v}) \equiv m$ (mod $p^L$) is $p^{(k_1-1)(L-1)}$. Thus
\begin{equation*}
    S_{a,p^L}^{K,1} = p^{(k_1-1)(L-1)} \sum_{\substack{\overrightarrow{x}\text{ mod $p$}\\ p\;\nmid \; N_K(\overrightarrow{x})}} \sum_{z=1}^{p^{L-1}} e\left( \frac{a}{p^L}(N_K(\overrightarrow{x}) + pz) \right) = 0.
\end{equation*}

Now assume $L=1$ and $p$ is unramified in $K|\mathbb{Q}$ so that $N_{K|\mathbb{Q}}(r)$ takes every nonzero value modulo $p$. Considering $N_{K|\mathbb{Q}}(ru)$ for $u \in \mathcal{R}^{\ast}$, it is easy to see that the number of $r \in \mathcal{R}^{\ast}$ satisfying $N_{K|\mathbb{Q}}(r) \equiv m$ (mod $p$) is the same for each value of $m = 1,2,\cdots,p-1$. It follows that
\begin{align*}
    \sum_{r \in \mathcal{R}^{\ast}} e\left( \frac{a}{p} N_{K|\mathbb{Q}}(r) \right) &= \frac{(p^{ef}-p^{(e-1)f})^g}{p-1} \sum_{m=1}^{p-1} e\left( \frac{a}{p}m \right)\\
    &= - p^{k_1-fg} \frac{(p^f-1)^g}{p-1} = - \frac{(p^f-1)^g}{p-1}.
\end{align*}
\qed\end{proof}

We include a classical bound of an exponential sum for convenience. Let $S_{a,q}^0 = \sum_{m=1}^q e\left( \frac{a}{q}m^{k_0} \right)$.
\begin{lem}[Theorem 4.2 of \cite{Vaughan}]\label{lem_Vaughan_exponential sum}
For $(a,q)=1$, $S_{a,q}^0 \ll q^{1 - 1/{k_0}}$.
\end{lem}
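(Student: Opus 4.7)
The plan is to reduce the bound to the case of prime power moduli via multiplicativity, and then invoke the classical Weil or Hua estimates. By the Chinese Remainder Theorem, whenever $(q_1, q_2) = 1$ and $(b_i, q_i) = 1$ one has
\begin{equation*}
S_{b_1 q_2 + b_2 q_1,\, q_1 q_2}^{0} = S_{b_1, q_1}^{0} \cdot S_{b_2, q_2}^{0},
\end{equation*}
so the task reduces to proving $|S_{a, p^L}^{0}| \ll p^{L(1 - 1/k_0)}$ for each prime power $p^L$ with $(a,p)=1$.

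For $L = 1$, if $p \nmid k_0$ the Weil bound gives $|S_{a,p}^{0}| \leq (k_0 - 1)\sqrt{p}$, and since $k_0 \geq 2$ this is at most a constant times $p^{1 - 1/k_0}$. For the finitely many primes $p \mid k_0$, the trivial bound $|S_{a,p}^{0}| \leq p$ combined with the fact that such $p$ are bounded in terms of $k_0$ absorbs everything into the implicit constant; alternatively one may count directly the solutions to $x^{k_0} \equiv y^{k_0} \pmod{p}$.

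For $L \geq 2$, I would follow the standard iterative argument. Set $\tau = \lceil L / k_0 \rceil$ and write $m = u + p^{L - \tau} v$ with $0 \leq u < p^{L-\tau}$ and $0 \leq v < p^{\tau}$, so that
\begin{equation*}
(u + p^{L-\tau} v)^{k_0} \equiv u^{k_0} + k_0 u^{k_0 - 1} p^{L - \tau} v \pmod{p^{L}}.
\end{equation*}
The inner sum over $v$ then vanishes unless $p^{\tau} \mid a k_0 u^{k_0 - 1}$, which restricts $u$ to a set of size $O(p^{L - \tau + v_p(k_0)})$; combining these factors and iterating as necessary yields $|S_{a, p^L}^{0}| \ll p^{L(1 - 1/k_0)}$.

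The main technical obstacle is the careful bookkeeping of the $p$-adic valuation of $k_0$ in the ramified case $p \mid k_0$, where the simple Weil bound is unavailable and one must really use the iterative descent to get the correct exponent $1 - 1/k_0$. Since the lemma is quoted verbatim as \cite[Theorem 4.2]{Vaughan}, the most economical course in this memoir is to appeal directly to that reference rather than reproduce the full argument.
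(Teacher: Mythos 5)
The paper itself offers no proof of this lemma; it is quoted verbatim as Theorem 4.2 of Vaughan's \emph{The Hardy--Littlewood Method} and cited, which your last sentence correctly identifies as the economical course. Your outline of how the cited result is actually established --- multiplicativity over coprime moduli, Weil's bound at primes, iterative descent at higher prime powers --- matches the structure of Vaughan's argument.

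There is, however, a technical slip in the descent step. You set $\tau=\lceil L/k_0\rceil$, write $m=u+p^{L-\tau}v$, and assert
\begin{equation*}
(u+p^{L-\tau}v)^{k_0}\equiv u^{k_0}+k_0u^{k_0-1}p^{L-\tau}v\pmod{p^L}.
\end{equation*}
This linearization requires the quadratic term $\binom{k_0}{2}u^{k_0-2}p^{2(L-\tau)}v^2$ (and higher ones) to vanish modulo $p^L$, i.e.\ $2(L-\tau)\ge L$, or $\tau\le L/2$. With your choice of $\tau$ this already fails for $k_0=2$, $L=3$ (where $\tau=2>L/2$), and more generally whenever the ceiling pushes $\tau$ past $L/2$. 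The standard fix --- and what Vaughan does --- is to peel off one digit at a time: write $m=v+p^{L-1}w$ with $0\le v<p^{L-1}$, $0\le w<p$, so the binomial expansion modulo $p^L$ is legitimately linear in $w$ for every $L\ge2$. The inner sum over $w$ then vanishes unless $p\mid ak_0v^{k_0-1}$, which for $p\nmid k_0$ forces $p\mid v$ and yields the recursion $S^0_{a,p^L}=p^{k_0-1}S^0_{a,p^{L-k_0}}$ when $L>k_0$ (and the endgame $S^0_{a,p^L}=p^{L-1}$ for $2\le L\le k_0$). Your remark about the extra bookkeeping at primes $p\mid k_0$ is the right one to flag: there the single-digit recursion does not clear the extra $v_p(k_0)$, and one must absorb the finitely many such primes into the implied constant or run a more careful count. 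With $\tau=1$ in place of $\lceil L/k_0\rceil$ the rest of your sketch is sound.
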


Let $e$, $f$, $g$ be as in Lemma~\ref{lem_down to representatives}, $t_1$, $\cdots$, $t_g$ and $\alpha^{(1)}$, $\cdots$, $\alpha^{(g)}$ as described in Section~\ref{sec_algebraic preparation for the singular series}. Choose a system $\mathcal{R}$ of representatives of $O_K/p^L O_K$ whose elements are of the form given by \eqref{eqn_representatives}.

\begin{lem}\label{lem_S_a_q_estimation}
Let $p\not\in \breve{\mathbb{P}}$. For $L \geq 1$,
\begin{equation*}
    |S_{a,p^L}^{K}| \ll (p^L)^{k_1 - 1 + 2\log_p k_1} \quad \text{and} \quad |S_{a,p^L}^{E}| \ll (p^L)^{k_2 - 1 + 2\log_p k_2}.
\end{equation*}
\end{lem}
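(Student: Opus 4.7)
Since $p \notin \breve{\mathbb{P}}$ forces $p$ to be unramified in $K|\mathbb{Q}$ (so $e = 1$, $fg = k_1$, and $\frak{p}_m^L = p^L O_{K_{\frak{p}_m}}$) and also $p > k_1$ (so $p \nmid k_1$), my plan is to apply Lemma~\ref{lem_down to representatives} with the system (\ref{eqn_representatives}), partition the summands $\gamma \in O_K/p^L O_K$ by the local valuation pattern $\Lambda = (\lambda_1, \ldots, \lambda_g)$ with $\lambda_m = v_{\frak{p}_m}(\gamma) \in \{0, 1, \ldots, L\}$, collapse most pieces by a Ramanujan-sum vanishing, and bound the surviving ones by direct counting.

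For $\Lambda$ with all $\lambda_m < L$, write the unit part via $\gamma^{(m)} = t_m^{\lambda_m} u_m$. Since $N_m(t_m) \in p^f \mathbb{Z}_p^*$ and CRT gives $N_{K|\mathbb{Q}}(\gamma) = \prod_m N_m(\gamma^{(m)})$, one obtains
\[
  N_{K|\mathbb{Q}}(\gamma) \equiv B \cdot p^{f\sum_m \lambda_m} \prod_m N_m(u_m) \pmod{p^L},
\]
where $B \in \mathbb{Z}_p^*$ depends only on $\Lambda$. Set $L' = L - f\sum_m \lambda_m$. When $L' \geq 1$, local class field theory on the unramified extension $K_{\frak{p}_m}|\mathbb{Q}_p$ makes $N_m : (O_{K_{\frak{p}_m}}/\frak{p}_m^{L'})^* \twoheadrightarrow (\mathbb{Z}/p^{L'})^*$ surjective with equal-sized fibers; substituting $\nu_m = N_m(u_m)$ and $\mu = \nu_1 \cdots \nu_g$ yields
\[
  S_\Lambda = (\text{fiber factor}) \cdot \phi(p^{L'})^{g-1} \, c_{p^{L'}}(aB),
\]
which vanishes identically for $L' \geq 2$ because $c_{p^{L'}}(aB) = \mu(p^{L'}) = 0$ at a $p$-adic unit $aB$.

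Only $L' \leq 1$ survives. For $L' \leq 0$ the exponential is trivially $1$ and the aggregate equals $M(0) := \#\{\gamma \bmod p^L : p^L \mid N(\gamma)\}$; the union bound $\{\sum \lambda_m \geq \lceil L/f \rceil\} \subseteq \bigcup_{\sum M_m = \lceil L/f\rceil}\{\lambda_m \geq M_m\}$ yields $M(0) \leq \binom{\lceil L/f \rceil + g - 1}{g - 1}\, p^{(k_1 - 1)L}$. For $L' = 1$ the explicit computation of the fiber factor together with $c_p(aB) = -1$ gives $|S_\Lambda| = \frac{(p^f - 1)^g}{p-1}\, p^{(k_1 - 1)(L - 1)} \leq 2 p^{(k_1 - 1)L}$ uniformly, and at most $\binom{(L-1)/f + g - 1}{g - 1}$ such patterns exist. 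Both binomials are dominated by $\binom{L + k_1}{k_1 - 1}$, and an elementary check ($(L + k_1)^L / L! \leq (2k_1)^L/L! \leq k_1^{2L}$ for $L \leq k_1$, and $(L + k_1)^{k_1 - 1} \leq k_1^{2L}$ for $L \geq k_1 \geq 2$) shows $\binom{L + k_1}{k_1 - 1} \ll k_1^{2L}$, giving $|S_{a,p^L}^K| \ll k_1^{2L} p^{(k_1 - 1)L} = (p^L)^{k_1 - 1 + 2\log_p k_1}$. The same argument with $K$ replaced by $E$ and $k_1$ by $k_2$ proves the $E$-bound.

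The main obstacle is the Ramanujan collapse step: I must verify cleanly that the reparametrization $u_m \mapsto \nu_m = N_m(u_m)$ really produces fibers of equal size (so the inner sum decouples over the local factors), that the precision $\frak{p}_m^{L - \lambda_m}$ in which $u_m$ is well-defined always exceeds $\frak{p}_m^{L'}$, and that the boundary patterns with some $\lambda_m = L$ (i.e., $\gamma^{(m)} = 0$) are correctly absorbed into $M(0)$ without polluting the $L' = 1$ computation. Once this structural input is in place, the rest reduces to combinatorial bookkeeping with binomial coefficients.
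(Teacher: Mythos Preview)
Your argument is correct and reaches the same bound as the paper, but the route is genuinely different. The paper proceeds by recursion: it uses Lemma~\ref{lem_Seperation of exponential sum}(2) to discard the unit part $S^{K,1}_{a,p^L}$, then applies inclusion--exclusion over the events $\gamma\in\frak{p}_m$ to express $S^{K}_{a,p^L}$ as a signed combination of $S^{K}_{\cdot,p^{L-lf}}$, defines $\theta_{(L)}$ via $M(L)\le p^{L(k_1-1+\theta_{(L)})}$, and unwinds the recursion to obtain $L\theta_{(L)}\le v\theta_{(v)}+2u\log_p g$ (with $L=uf+v$), finishing with an explicit bound on the base cases $1\le v\le f$. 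Your approach instead decomposes $O_K/p^LO_K$ once and for all by the full valuation vector $\Lambda$, and eliminates most strata via the vanishing of the Ramanujan sum $c_{p^{L'}}(\cdot)$ for $L'\ge 2$; what remains ($L'\le 1$) is then counted combinatorially with a stars-and-bars bound.

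The trade-off: the paper's recursion is more elementary (the equal-fiber property is hidden in the Hensel-lifting Lemma~\ref{lem_Hensel lifting} rather than invoked as a structural fact about the local norm), while your argument is more transparent about \emph{why} the cancellation occurs---it isolates the Ramanujan sum explicitly. Note that for your surjectivity-with-equal-fibers step you do not actually need local class field theory: since $p$ is unramified the surjectivity of $(O_{K_{\frak p}}/\frak p^{L'})^*\to(\mathbb Z/p^{L'})^*$ follows from the surjectivity of the residue-field norm together with $N(1+p^r x)\equiv 1+p^r\mathrm{Tr}(x)\pmod{p^{r+1}}$. Your combinatorial tail estimate $\binom{L+k_1}{k_1-1}\ll k_1^{2L}$ is written a bit loosely (the displayed inequality $(L+k_1)^L/L!$ does not literally bound $\binom{L+k_1}{L+1}$), but the conclusion is correct: for $L\ge k_1$ one has $(L+k_1)^{k_1-1}\le k_1^{2L}$ by a calculus check, and for $L\le k_1$ the crude bound $\binom{L+k_1}{k_1-1}\le 2^{L+k_1}\ll_{k_1} k_1^{2L}$ suffices.
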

\begin{proof}
We prove the first inequality. Write $L = uf+v$, $1 \leq v \leq f$ and let $\mathcal{R}_i = \{ r \in \mathcal{R} \;:\; r \in \frak{p}_i \}$. The case $L=1$ easily follows from Lemmas ~\ref{lem_Seperation of exponential sum} and ~\ref{lem_Vaughan_exponential sum}, so we assume $L > 1$. Since $p \nmid k_1$, by Lemma~\ref{lem_Seperation of exponential sum} and the inclusion-exclusion principle
\begin{align*}
    S_{a,p^L}^{K} = S_{a,p^L}^{K,2} &= \sum_{r \in \bigcup_{m=1}^g \mathcal{R}_m} e\left( \frac{a}{p^{L-f}} \frac{N_{K|\mathbb{Q}}(r)}{p^f} \right)\\
    &= \sum_{l=1}^g (-1)^{l-1} \sum_{\substack{i_1,\cdots,i_l=1\\ i_1 < \cdots < i_l}}^g \sum_{r \in \bigcap_{m=1}^l \mathcal{R}_{i_m}} E(i_1,\cdots,i_l)\\
    &= \sum_1 + \sum_2
\end{align*}
where
\begin{equation*}
    E(i_1,\cdots,i_l) = e \left( \frac{a}{p^{L-lf}} \frac{N_{K|\mathbb{Q}}(t_{i_1}t_{i_2}\cdots t_{i_l})}{p^{lf}}   N_{K|\mathbb{Q}}\left(\frac{r}{t_{i_1}t_{i_2}\cdots t_{i_l}}\right)  \right),
\end{equation*}
and, in case $L \leq fg$, we wrote the sum over $l < \lfloor L/f \rfloor$ as $\sum_1$ and the sum over $\lfloor L/f \rfloor \leq l \leq g$ as $\sum_2$. Let $a_{(i_1,\cdots,i_l)} = a \frac{N_{K|\mathbb{Q}}(t_{i_1}t_{i_2}\cdots t_{i_l})}{p^{lf}}$. By the choice of $\mathcal{R}$, we have $(a_{(i_1,\cdots,i_l)},p)=1$ and $\frac{r}{t_{i_1}t_{i_2}\cdots t_{i_l}} \in O_K$ when $r \in \bigcap_{m=1}^l \mathcal{R}_{i_m}$. For $\sum_1$, observe that $\{ \frac{r}{t_{i_1}t_{i_2}\cdots t_{i_l}} \;:\; r \in \bigcap_{m=1}^l \mathcal{R}_{i_m}  \}$ runs through a system of representatives modulo $p^{L-lf} O_K$ by $p^{lf k_1 - lf}$ times. As for $\sum_2$, we first note that $E(i_1,\cdots,i_l) = 1$ always. Recall that an element of $\mathcal{R}$ is of the form
$$r = \sum_{m=1}^g \alpha^{(m)} \left( s_0^{(m)} + s_1^{(m)} t_m + s_2^{(m)} t_m^2 + \cdots + s_{L-1}^{(m)} t_m^{L-1} \right)
$$
where $s_j^{(m)} \in \Gamma^{(m)}$, $|\Gamma^{(m)}| = p^f$, and $r \in \mathcal{R}_i$ if and only if $s_0^{(i)} = 0$. The set $\bigcap_{m=1}^l \mathcal{R}_{i_m}$ consists of the numbers with $s_0^{(i_1)} = \cdots =s_0^{(i_l)}=0$ and hence contains $p^{lf(L-1)}p^{(g-l)fL} = p^{k_1L - lf}$ elements. We thus can write
\begin{align*}
    S_{a,p^L}^{K} &= \sum_{\substack{1 \leq l \leq g\\ l < \lfloor L/f \rfloor}} (-1)^{l-1} \sum_{\substack{i_1,\cdots,i_l=1\\ i_1 < \cdots < i_l}}^g p^{(k_1-1)lf} S_{a_{(i_1,\cdots,i_l)},p^{L-lf}}^K\\
    &\quad + \sum_{\lfloor L/f \rfloor \leq l \leq g} (-1)^{l-1} \sum_{\substack{i_1,\cdots,i_l=1\\ i_1 < \cdots < i_l}}^g p^{k_1L-lf}.
\end{align*}

Let $M(x)$ be the maximum value of $|S_{b,p^x}^K|$ among all $b \not\equiv 0$ (mod $p$) for $x \geq 1$, and $p^{k_1 x}$ when $x \leq 0$. Let $\theta_{(x)} = \theta_{K,p}(x)$ be the real number satisfying $M(x) = p^{x(k_1 - 1 + \theta_{(x)})}$. (In particular, $\theta_{(x)} = 1$ when $x < 0$ and by convention we put $\theta_{(0)} = 1$.) Then we have
\begin{align*}
    |S_{a,p^L}^K|   &\leq \sum_{l=1}^g {{g}\choose{l}} p^{lf(k_1-1)} M(L-lf)\\
    &\leq g \cdot \max_l \left\{ {{g}\choose{l}} p^{lf(k_1-1)} M(L-lf) \right\}\\
    &\leq g \cdot \max_l \left\{ {{g}\choose{l}} p^{lf(k_1-1) + (L-lf)(k_1-1+\theta_{(L-lf)})} \right\}\\
    &\leq \max_l \left\{ p^{(L-lf)(k_1-1+\theta_{(L-lf)}) + lf(k_1-1 + \frac{\log_p g}{f} + \frac{\log_p g}{lf})} \right\}
\end{align*}
whence for some $l$
\begin{equation*}
  L(k_1-1+\theta_{(L)}) \leq L(k_1-1+\theta_{(L-lf)}) + lf(-\theta_{(L-lf)} + \frac{l+1}{lf}\log_p g)
\end{equation*}
or
\begin{equation*}
  L \theta_{(L)} \leq \max_l \left\{ (L-lf)\theta_{(L-lf)} + (l+1)\log_p g \right\}.
\end{equation*}
From this expression, in an inductive way, one immediately has
\begin{equation}\label{eqn_L_theta_L}
(uf+v)\theta_{(uf+v)} \leq v \theta_{(v)} + 2 u \log_p g.
\end{equation}

Assume $v=1$ first. By Lemma~\ref{lem_Seperation of exponential sum}, $S_{a,p}^K = -\frac{(p^f-1)^g}{p-1} + \sum_{i=1}^g (-1)^{i-1} {{g}\choose{i}} p^{k_1-if}$. Here
\begin{align*}
    \frac{(p^f-1)^g}{p-1} &= \left( p^{f-1}+p^{f-2} + \cdots + 1 \right) \left( p^f -1 \right)^{g-1}\\
     &< 2 p^{f-1} p^{f(g-1)} = p^{k_1-1+\log_p 2},
\end{align*}
and observe that $0 < \sum_{i=1}^g (-1)^{i-1} {{g}\choose{i}} p^{k_1-if} \leq g p^{k_1-f}$. Thus
 \begin{align*}
    |S_{a,p}^K| < \max \left\{ p^{k_1-1+\log_p 2}, \; g p^{k_1-f} \right\} \leq k_1 p^{k_1-1} = p^{k_1-1+\log_p k_1}.
 \end{align*}

Now assume $v > 1$. Then
\begin{equation*}
    |S_{a,p^v}^K| = \left| \sum_{i=1}^g (-1)^{i-1} {{g}\choose{i}} p^{vk_1-if}\right| \leq k_1 p^{vk_1-f} = p^{v(k_1 - 1 + 1 - \frac{f}{v} + \frac{\log_p k_1}{v})}
\end{equation*}
and $1 - \frac{f}{v} + \frac{\log_p k_1}{v} \leq \frac{\log_p k_1}{f}$ where the equality holds when $v=f$. We have proved that $v \theta_{(v)} \leq \log_p k_1 $ in both cases; hence from \eqref{eqn_L_theta_L}
\begin{equation*}
    \theta_{(uf+v)} \leq \frac{ \log_p k_1 + 2 u \log_p g}{uf+v} \leq \frac{ (2u+1)\log_p k_1}{u+1} < 2 \log_p k_1.
\end{equation*}
\qed\end{proof}

The next one is an immediate corollary.
\begin{lem}\label{lem_An_p_Good primes}
If $p \not\in \breve{\mathbb{P}}$ then there exists an absolute constant $\delta > 0$ such that $\breve{A}_n(p^L) \ll \frac{1}{(p^L)^{1 + \delta}}$.
\end{lem}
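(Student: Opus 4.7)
The strategy is essentially to unpack the definitions: since $p\notin\breve{\mathbb{P}}$ divides $p^L$ but no element of $\breve{\mathbb{P}}$ divides $p^L$, the restricted system $\breve{R}_{p^L}$ coincides with the full system of residues mod $p^L$, so $\breve{S}_{a,p^L}=S_{a,p^L}$, and moreover the sum factors over the three pieces of $F$. First I would write
\begin{equation*}
    \breve{S}_{a,p^L} \;=\; S^0_{a,p^L}\cdot S^K_{a,p^L}\cdot S^E_{a,p^L},
\end{equation*}
using that $F(\overrightarrow{z})=z_0^{k_0}+N_K(\overrightarrow{x})+N_E(\overrightarrow{y})$ is a sum of functions of disjoint blocks of variables. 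Then I would apply the three bounds already in hand: Lemma~\ref{lem_Vaughan_exponential sum} gives $|S^0_{a,p^L}|\ll (p^L)^{1-1/k_0}$, and Lemma~\ref{lem_S_a_q_estimation} gives $|S^K_{a,p^L}|\ll (p^L)^{k_1-1+2\log_p k_1}$ and $|S^E_{a,p^L}|\ll (p^L)^{k_2-1+2\log_p k_2}$. (Both hypotheses are met: $p\notin\breve{\mathbb{P}}$ already forces $p>k_1k_2$, hence $p\nmid k_1 k_2$.)

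Multiplying these and dividing by $(p^L)^{1+k_1+k_2}$, and using $\#\{a\bmod^{\!\ast}\, p^L\}\leq p^L$ when summing, I obtain
\begin{equation*}
    |\breve{A}_n(p^L)| \;\ll\; (p^L)^{-1-\frac{1}{k_0}+2\log_p(k_1k_2)}.
\end{equation*}
The task is then to show that $-\frac{1}{k_0}+2\log_p(k_1k_2)\leq -\delta$ for some $\delta>0$ independent of $p$ and $L$. From $p\notin\breve{\mathbb{P}}$ we have $p\geq (k_1k_2)^{2k_0}$, whence $2\log_p(k_1k_2)\leq 1/k_0$ and the exponent is $\leq -1$.

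The only subtle point — and the one place where one has to look twice — is getting the \emph{strict} inequality that provides a positive $\delta$. For this I would exploit that under the assumptions $k_1,k_2>1$ (so $k_1k_2\geq 4$) and $k_0>1$, the number $(k_1k_2)^{2k_0}$ is a proper prime power of a composite, hence itself composite; thus the smallest prime $p_0$ with $p_0\notin\breve{\mathbb{P}}$ satisfies $p_0>(k_1k_2)^{2k_0}$ strictly, giving
\begin{equation*}
    \delta \;:=\; \tfrac{1}{k_0}-2\log_{p_0}(k_1k_2)\;>\;0.
\end{equation*}
Since $\log_p(k_1k_2)$ is decreasing in $p$, this $\delta$ works uniformly for every $p\notin\breve{\mathbb{P}}$ and every $L\geq 1$, completing the proof. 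The main obstacle is really just this edge-case verification at the smallest admissible prime; the rest is bookkeeping on exponents from the previously established estimates.
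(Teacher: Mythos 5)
Your proof is correct and follows essentially the same route as the paper's: factor $S_{a,p^L}$ over the disjoint blocks of variables, invoke Lemma~\ref{lem_Vaughan_exponential sum} and Lemma~\ref{lem_S_a_q_estimation}, and conclude from $p\notin\breve{\mathbb{P}}$ that the resulting exponent is strictly less than $-1$. The paper asserts the strict inequality $2\log_p(k_1 k_2) < 1/k_0$ without comment, while you correctly supply the reason ($(k_1 k_2)^{2k_0}$ is composite when $k_0,k_1,k_2>1$, so every admissible prime is strictly larger) and obtain a $\delta$ that is visibly uniform in $p$ and $L$; this is a welcome clarification rather than a different argument.
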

\begin{proof}
Since $\breve{S}_{a,p^L} = S_{a,p^L}$ for $p \not\in \breve{\mathbb{P}}$, by Lemmas ~\ref{lem_Vaughan_exponential sum} and ~\ref{lem_S_a_q_estimation}
\begin{align*}
    \breve{A}_n(p^L) &\ll \frac{1}{(p^L)^{1+k_1+k_2}}\cdot p^L \cdot | S_{a,p^L}^0 S_{a,p^L}^K S_{a,p^L}^E |\\
    &\ll p^{ L\left( -k_1 - k_2 + (1 - \frac{1}{k_0}) + (k_1 - 1+ 2 \log_p k_1) + (k_2-1+2\log_p k_2) \right) }\\
    &< p^{ L\left(-1 - \frac{1}{k_0} + 2 \log_p k_1 k_2\right) }
\end{align*}
and $2 \log_p k_1 k_2 < \frac{1}{k_0}$ because $p \not\in \breve{\mathbb{P}}$.
\qed\end{proof}

Now the singular series is estimated.
\begin{thm}\label{thm_singular series}
There exist positive absolute constants $c_1$, $c_2$ that depend only on $K$ and $E$ such that
$c_1 < \breve{\frak{S}}(n^{\nu},n) < c_2$ for all sufficiently large $n$.
\end{thm}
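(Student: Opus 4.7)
The plan is to pass to the infinite Euler product $\breve{\frak{S}}(\infty,n) = \prod_p \breve{\chi}_n(p)$ and verify that (a) the product converges absolutely, uniformly in $n$; (b) the truncation error $\breve{\frak{S}}(\infty,n) - \breve{\frak{S}}(n^{\nu},n)$ is $o(1)$; and (c) each Euler factor is uniformly bounded above and below away from zero. The factorization is legitimate because $\breve{A}_n(q)$ is multiplicative by Corollary~\ref{cor_An_q_An_r__An_qr} and $\kappa(q)^{1+k_1+k_2}/|\breve{R}_{\kappa(q)}|$ is trivially so.

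Items (a), (b), and the upper bound $\breve{\frak{S}}(\infty,n) < c_2$ are the easy part. For $p \in \breve{\mathbb{P}}$, a finite set, Lemma~\ref{lem_An_p_Bad primes} reduces $\breve{\chi}_n(p)$ to a finite sum bounded by a constant depending only on $p$. For $p \notin \breve{\mathbb{P}}$, Lemma~\ref{lem_An_p_Good primes} yields $|\breve{\chi}_n(p) - 1| \ll p^{-1-\delta}$ with implicit constant independent of $n$, so $\prod_p \breve{\chi}_n(p)$ converges absolutely; the tail bound $|\breve{\frak{S}}(\infty,n) - \breve{\frak{S}}(n^{\nu},n)| \ll n^{-\nu\eta}$ for some $\eta > 0$ follows by estimating $\sum_{q > n^{\nu}} \kappa(q)^{1+k_1+k_2}|\breve{A}_n(q)|/|\breve{R}_{\kappa(q)}|$ via the Euler factorization and the $p^{-l(1+\delta)}$ decay.

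The heart of the argument is the uniform lower bound $\breve{\chi}_n(p) \geq c(p) > 0$ independent of $n$. The asymptotic $|\breve{\chi}_n(p)-1| \ll p^{-1-\delta}$ forces $\breve{\chi}_n(p) \in (1/2, 2)$ for all $p \notin \breve{\mathbb{P}}$ above some absolute constant $p_*$; the remaining primes form the finite set $\breve{\mathbb{P}} \cup \{p \leq p_*\}$, and I would handle them individually. Combining Lemma~\ref{lem_An_q_to_Mn_q} with Lemma~\ref{lem_An_p_Bad primes}, for $p \in \breve{\mathbb{P}}$ and any $L > 2\gamma(p)+1$,
\begin{equation*}
    \breve{\chi}_n(p) = \frac{p^{1+k_1+k_2}}{|\breve{R}_p|} \cdot \frac{\breve{M}_n(p^L)}{p^{(k_1+k_2)L}},
\end{equation*}
and an analogous limiting formula applies for $p \notin \breve{\mathbb{P}}$; in both cases the positivity of $\breve{\chi}_n(p)$ reduces to producing a single restricted-sum solution $\overrightarrow{z} \in \breve{R}_{p^L}$ of $F(\overrightarrow{z}) \equiv n \pmod{p^L}$ for every $n$.

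The main obstacle is exhibiting this local solution subject to the $\breve{R}$-condition $p \nmid N_K(\overrightarrow{x}) N_E(\overrightarrow{y})$. The approach is to exploit $(\delta_K,\delta_E)=1$: at any $p$, at least one of $K, E$, say $K$, is unramified, so local class field theory delivers $N_{K|\mathbb{Q}}(O_K^*) = \mathbb{Z}_p^*$. I would first pick $\overrightarrow{y} \in O_E$ with $N_E(\overrightarrow{y})$ a $p$-adic unit, then vary $z_0 \pmod{p}$ to make $n - z_0^{k_0} - N_E(\overrightarrow{y})$ a $p$-adic unit, and realize the latter as $N_K(\overrightarrow{x})$ with $\overrightarrow{x} \in O_K^*$. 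The partial-derivative identity from the proof of Lemma~\ref{lem_An_p_Bad primes} certifies that such a solution is non-singular in some $x_i$-direction, so Lemma~\ref{lem_Hensel lifting} promotes it to mod $p^L$ for any $L$. Small primes where the residues available for $z_0^{k_0}$ and $N_E(\overrightarrow{y})$ mod $p$ are scarce require a brief case analysis to confirm an admissible choice; this is where the combinatorial details are most delicate, but only finitely many primes are involved so the analysis is bounded.
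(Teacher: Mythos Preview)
Your proposal is correct and follows essentially the same architecture as the paper: multiplicativity and Lemmas~\ref{lem_An_p_Bad primes}, \ref{lem_An_p_Good primes} give absolute convergence and the truncation bound; the factors for large $p$ are trapped near $1$; and for the finitely many remaining $p$ you reduce $\breve{\chi}_n(p)$ to the density $\breve{M}_n(p^L)/p^{(k_1+k_2)L}$ via Lemma~\ref{lem_An_q_to_Mn_q} and bound it below using unramification of one of $K,E$ at $p$.

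The only notable difference is in how the local lower bound is extracted. You propose to exhibit a single restricted solution (choose $\overrightarrow{y}$ with $N_E(\overrightarrow{y})$ a unit, adjust $z_0$, then invoke norm surjectivity for $K$) and Hensel-lift it, flagging a possible case analysis at very small $p$. The paper instead counts directly: with $p$ unramified in $K$, every unit residue $m\pmod{p^L}$ is hit by $N_K$ the same number $s(m)=s(1)$ of times, and for \emph{every} $\overrightarrow{y}\in\breve{R}_{p^L}^E$ there is at least one residue of $z_0\pmod p$ (hence $\ge p^{L-1}$ choices mod $p^L$) making $n-z_0^{k_0}-N_E(\overrightarrow{y})$ a unit. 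Multiplying gives $\breve{M}_n(p^L)\ge s(1)\,|\breve{R}_p^E|\,p^{(L-1)(k_2+1)}$, an explicit bound uniform in $n$ with no case split. This is exactly your idea quantified, and it shows the ``delicate combinatorics'' you anticipated are not actually needed: since $z_0\mapsto z_0^{k_0}\pmod p$ takes at least the two values $0$ and $1$, one can always avoid any prescribed residue.
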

\begin{proof}
Since $1 \leq \frac{\kappa(q)^{1+k_1+k_2}}{|\breve{R}_{\kappa(q)}|} \leq \frac{Q^{1+k_1+k_2}}{|\breve{R}_{Q}|} \ll 1$, the absolute convergence of
\begin{equation*}
    \lim_{t \rightarrow \infty} \breve{\frak{S}}(t,n)  = \breve{\frak{S}}(\infty,n)
\end{equation*}
follows from Corollary~\ref{cor_An_q_An_r__An_qr}, Lemmas ~\ref{lem_An_p_Bad primes} and \ref{lem_An_p_Good primes}. More precisely we have
\begin{equation*}
 | \breve{\frak{S}}(\infty,n) - \breve{\frak{S}}(n^{\nu},n)| \ll \frac{1}{n^{\nu\delta}}
\end{equation*}
so it suffices to show that $0 < c_1 < \breve{\frak{S}}(\infty,n) = \prod_{p \in \mathbb{P}} \breve{\chi}_n(p) < c_2$ for some constants $c_1$ and $c_2$.

For $p \not\in \breve{\mathbb{P}}$, $\frac{\kappa(p)^{1+k_1+k_2}}{|\breve{R}_{\kappa(p)}|} = 1$ so Lemma~\ref{lem_An_p_Good primes} gives
\begin{equation*}
    |\breve{\chi}_n(p) - 1| \leq \sum_{L=1}^{\infty} |\breve{A}_n(p^L)| \ll \sum_{L=1}^{\infty} \frac{1}{(p^L)^{1+\delta}} \ll \frac{1}{p^{1+\delta}}
\end{equation*}
which implies that there exists a prime $p_0$ depending only on $K$ and $E$ such that
\begin{equation*}
    \frac{1}{2} < \prod_{p \geq p_0} \breve{\chi}_n(p) < \frac{3}{2}.
\end{equation*}

Suppose $p < p_0$. Recall that every prime $p$ is unramified in at least one of $K|\mathbb{Q}$ and $E|\mathbb{Q}$, so assume $p$ is unramified in $K|\mathbb{Q}$. For $L \geq 2$, $\breve{M}_n(p^L) \geq N_n(p^L)$ where
\begin{equation*}
    N_n(p^L) = \left| \{ \overrightarrow{z} \in \breve{R}_{p^L}\;:\; F(\overrightarrow{z}) \equiv n\text{ mod $p^L$},\; N_K(\overrightarrow{x}) \not\equiv 0 \text{ mod $p$} \} \right|.
\end{equation*}
As in the proof of Lemma~\ref{lem_Seperation of exponential sum}, the number $s(m)$ of $\overrightarrow{x}$ modulo $p^L$ satisfying $N_K(\overrightarrow{x}) \equiv m$ (mod $p^L$) is the same for all $m \not\equiv 0$ (mod $p$). Since there are $(p^f-1)^g p^{(L-1)k_1}$ units in $O_K/p^L O_K = \frak{p}_1^{L}\cdots\frak{p}_g^{L}$, it is easy to see that $s(m) = \frac{(p^f-1)^g p^{(L-1)k_1}}{(p-1)p^{L-1}}$. Let $h(L)$ be the number of $(z_0,\overrightarrow{y})$ modulo $p^L$ such that $\overrightarrow{y} \in \breve{R}_{p^L}^E$ and $z_0^{k_0} + N_E(\overrightarrow{y})-n \not\equiv 0$ (mod $p$). For each $\overrightarrow{y} \in \breve{R}_{p^L}^E$, if $N_E(\overrightarrow{y}) - n \equiv 0$ (mod $p$) then there are $p^L - p^{L-1}$ $z_0$'s counted by $h(L)$. Otherwise there are at least $p^{L-1}$ $z_0$'s, so $h(L) \geq |\breve{R}_{p^L}^E| p^{L-1} = |\breve{R}_{p}^E| p^{(L-1)(k_2+1)}$. It follows that
\begin{multline*}
    N_n(p^L) \geq s(1)h(L) \geq \frac{(p^f-1)^g}{p-1} p^{(L-1)(k_1-1) + (L-1)(k_2+1)} |\breve{R}_p^E| \\ \geq (p-1)^{k_1-1} |\breve{R}_p^E| p^{(k_1+k_2)(L-1)}.
\end{multline*}
Let $e', f', g'$ be the ramification index, inertial degree and decomposition number of $p$ in $E|\mathbb{Q}$. Then
\begin{equation*}
    |\breve{R}_p^E| \geq |(O_E/pO_E)^{\ast}| = (p^{e'f'} - p^{(e'-1)f'})^{g'} = p^{k_2}\left(1 - \frac{1}{p^{f'}}\right)^{g'} \geq \left(\frac{p}{2}\right)^{k_2},
\end{equation*}
and hence
\begin{equation*}
    \frac{N_n(p^L)}{p^{(k_1+k_2)L}} \geq \frac{(p-1)^{k_1-1}}{2^{k_2}p^{k_1}}.
\end{equation*}
Now write $\breve{\chi}_n^{(L)}(p) = \sum_{l=0}^L \frac{\kappa(p^l)^{1+k_1+k_2}}{|\breve{R}_{\kappa(p^l)}|} \breve{A}_n(p^l)$. By Lemma~\ref{lem_An_q_to_Mn_q},
\begin{equation*}
    \breve{\chi}_n^{(L)}(p) = \frac{\kappa(p)^{1+k_1+k_2}}{|\breve{R}_{\kappa(p)}|} \frac{\breve{M}_n(p^L)}{p^{(k_1+k_2)L}} \geq \frac{\kappa(p)^{1+k_1+k_2}}{|\breve{R}_{\kappa(p)}|} \frac{(p-1)^{k_1-1}}{2^{k_2}p^{k_1}} =: u_p
\end{equation*}
and so $\breve{\chi}_n(p) \geq u_p > 0$.

As for the upper bound of $\breve{\chi}_n(p)$, if $p \in \breve{\mathbb{P}}$ then Lemmas~\ref{lem_An_p_Bad primes} and \ref{lem_An_q_to_Mn_q} give
\begin{multline*}
     \breve{\chi}_n(p) = \sum_{l=0}^{2 \gamma(p) + 1} \frac{\kappa(p^l)^{1+k_1+k_2}}{|\breve{R}_{\kappa(p^l)}|} \breve{A}_n(p^l) = \frac{p^{1+k_1+k_2}}{|\breve{R}_p|}  \frac{\breve{M}_n(p^{2 \gamma(p) + 1})}{p^{(k_1+k_2)(2 \gamma(p) + 1)}} \\ \leq \frac{p^{1+k_1+k_2}}{|\breve{R}_p|} \frac{|\breve{R}_p| p^{(1+k_1+k_2)2\gamma(p)}}{p^{(k_1+k_2)(2\gamma(p)+1)}} = p^{2\gamma(p)+1}.
\end{multline*}
If $p \not\in \breve{\mathbb{P}}$, $\breve{\chi}_n(p)$ converges absolutely by Lemma~\ref{lem_An_p_Good primes}. As the bound in Lemma~\ref{lem_An_p_Good primes} is independent of $n$, one can choose an upper bound $U_p$ of $\breve{\chi}_n(p)$ that depends only on $K$ and $E$. Therefore
\begin{equation*}
    c_1 = \frac{1}{2}\prod_{p < p_0} u_p < \breve{\frak{S}}(\infty,n) < \frac{3}{2} (k_1,k_2)^2 \prod_{p \in \breve{\mathbb{P}}} p \prod_{\substack{p < p_0 \\ p \not\in \breve{\mathbb{P}}}} U_p = c_2.
\end{equation*}

\qed\end{proof}

\section{The singular integral and minor arcs}\label{sec_singular integral}

The following proof is basically from \cite{Davenport}.
\begin{thm}\label{thm_singular integral}
We can choose $\frak{B}$ so that $\frak{J}(n^{\nu}) \rightarrow \frak{J}_0 > 0$ as $n \rightarrow \infty$.
\end{thm}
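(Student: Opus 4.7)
The plan is to follow Davenport's classical argument. I would choose $\frak{B}$ to be a small closed box centered at a point $\overrightarrow{\zeta}_0 = (\zeta_0^{\ast}, \overrightarrow{\xi}_0^{\ast}, \overrightarrow{\eta}_0^{\ast})$ lying in the open cube $(0,1)^{1+k_1+k_2}$ and satisfying $F(\overrightarrow{\zeta}_0) = 1$ and $\partial F / \partial \zeta_0 |_{\overrightarrow{\zeta}_0} \neq 0$. Such a point exists: pick $\overrightarrow{\xi}_0^{\ast} \in (0,1)^{k_1}$, $\overrightarrow{\eta}_0^{\ast} \in (0,1)^{k_2}$ with $N_K(\overrightarrow{\xi}_0^{\ast}),\, N_E(\overrightarrow{\eta}_0^{\ast}) > 0$ (for instance along a basis element with positive norm, perturbed slightly) and $N_K(\overrightarrow{\xi}_0^{\ast}) + N_E(\overrightarrow{\eta}_0^{\ast}) < 1$, then set $\zeta_0^{\ast} = \bigl( 1 - N_K(\overrightarrow{\xi}_0^{\ast}) - N_E(\overrightarrow{\eta}_0^{\ast}) \bigr)^{1/k_0} \in (0,1)$, which makes $\partial F / \partial \zeta_0 = k_0 (\zeta_0^{\ast})^{k_0 - 1}$ strictly positive at $\overrightarrow{\zeta}_0$. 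I take $\frak{B} = \frak{B}_0 \times \frak{B}_1 \times \frak{B}_2$ small enough that $\partial F / \partial \zeta_0$ stays bounded below by a positive constant on $\frak{B}$.

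For each fixed $(\overrightarrow{\xi}, \overrightarrow{\eta}) \in \frak{B}_1 \times \frak{B}_2$, the map $\zeta_0 \mapsto F(\zeta_0, \overrightarrow{\xi}, \overrightarrow{\eta})$ is a smooth diffeomorphism from $\frak{B}_0$ onto an interval. Changing variables via $t = F(\zeta_0, \overrightarrow{\xi}, \overrightarrow{\eta})$ and then integrating out $(\overrightarrow{\xi}, \overrightarrow{\eta})$ produces a compactly supported, nonnegative density $\phi : \mathbb{R} \to [0,\infty)$ with
$$\int_{\frak{B}} e(\gamma F(\overrightarrow{\zeta})) \, d\overrightarrow{\zeta} = \int_{\mathbb{R}} \phi(t) e(\gamma t) \, dt,$$
and $\phi$ is smooth in a neighborhood of $t = 1$ by construction. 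Substituting into the definition of $\frak{J}(c)$ and applying Fubini to evaluate the inner integral in $\gamma$ yields
$$\frak{J}(c) = \int_{\frak{B}} \frac{\sin\bigl( 2 \pi c (F(\overrightarrow{\zeta}) - 1) \bigr)}{\pi (F(\overrightarrow{\zeta}) - 1)} \, d\overrightarrow{\zeta} = \int_{\mathbb{R}} \phi(t) \, \frac{\sin(2 \pi c(t-1))}{\pi (t-1)} \, dt.$$

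As $c \to \infty$ the Dirichlet kernel concentrates on $t = 1$, and classical Fourier inversion gives $\frak{J}(c) \to \phi(1) =: \frak{J}_0$. The positivity $\frak{J}_0 > 0$ is automatic: because the level hypersurface $\{F = 1\} \cap \frak{B}$ is a smooth manifold crossing the interior of $\frak{B}$ transversely (the gradient of $F$ being nonzero there), the co-area pushforward density $\phi$ is strictly positive at $t=1$. Taking $n^\nu \to \infty$ as $n \to \infty$ gives the claimed convergence.

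The main obstacle is justifying the passage to the limit rigorously, since the Dirichlet kernel is only conditionally integrable. This is handled by integrating by parts twice in $\gamma$ (using smoothness of $\phi$ near $t=1$ and compact support) to obtain $O(\gamma^{-2})$ decay of the relevant Fourier transform, after which dominated convergence applies; equivalently, one invokes the classical Dirichlet--Jordan theorem since $\phi$ is $C^1$ at $t=1$. Either way this step is routine given the careful choice of $\frak{B}$, and we may appeal to the argument in \cite{Davenport} as stated.
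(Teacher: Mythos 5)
Your proposal follows the same Davenport-style argument as the paper: center a small box at a nonsingular real solution of $F(\overrightarrow{\zeta}) = 1$ with $\partial F/\partial \zeta_0 \neq 0$, change variables via the level set of $F$, and apply the Fourier integral theorem to extract $\frak{J}_0 = V(0) > 0$.

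One small point worth flagging: your construction insists on finding $\overrightarrow{\xi}_0^{\ast} \in (0,1)^{k_1}$ and $\overrightarrow{\eta}_0^{\ast} \in (0,1)^{k_2}$ with $N_K(\overrightarrow{\xi}_0^{\ast}) > 0$ and $N_E(\overrightarrow{\eta}_0^{\ast}) > 0$ \emph{separately}, suggesting to take a point near a basis element of positive norm. For an arbitrary integral basis this can fail: for example with $K = \mathbb{Q}(\sqrt{2})$ and basis $\{\sqrt{2},\, 1+\sqrt{2}\}$ one has $N_K(x,y) = -2x^2 - 3xy - y^2 < 0$ on the entire open positive orthant, so no basis element has positive norm and no interior point works. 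The paper avoids this by requiring only $N_K + N_E \neq 0$ (a generic condition) together with a rescaling step that normalizes $c_0 = k_0\phi_0^{k_0-1} = 1$. Your argument can be repaired either by relaxing the sign requirement to $0 < N_K(\overrightarrow{\xi}_0^{\ast}) + N_E(\overrightarrow{\eta}_0^{\ast}) < 1$, or by noting that one may always choose an integral basis containing $1$ (so that $N_K(1,0,\ldots,0) = 1$ and a nearby interior point has positive norm). Aside from this cosmetic gap, the proofs coincide.
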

\begin{proof}
Choose small positive numbers $\phi_1$, $\cdots$, $\phi_{k_1+k_2}$ so that the real value $\phi_0$ that makes $\phi_0^{k_0} + N_K(\phi_1,\cdots,\phi_{k_1}) + N_E(\phi_{k_1+1},\cdots,\phi_{k_1+k_2}) = 1$ is positive, not equal to 1, and hence
\begin{equation*}
    \partial F / \partial \phi_0 \neq 0 \quad \text{and}\quad N_K(\phi_1,\cdots,\phi_{k_1}) + N_E(\phi_{k_1+1},\cdots,\phi_{k_1+k_2}) \neq 0.
\end{equation*}

Then $\overrightarrow{\phi} = (\phi_0,\cdots,\phi_{k_1+k_2})$ is a nonsingular solution to $F(\overrightarrow{\phi}) = 1$. Let $\frak{B}$ be a box centered at $\overrightarrow{\phi}$ with side lengthes $2 \lambda$. Write
\begin{align*}
    \frak{J}(\mu) &= \int_{-\mu}^{\mu} \int_{\frak{B}} e\left( \gamma F(\overrightarrow{\zeta}) \right) d\overrightarrow{\zeta} e(-\gamma) d\gamma\\
    &=\int_{\frak{B}} \frac{\sin 2\pi \mu(F(\overrightarrow{\zeta})-1)}{\pi (F(\overrightarrow{\zeta})-1)} d \overrightarrow{\zeta}\\
    &= \int_{-\lambda}^{\lambda}\cdots\int_{-\lambda}^{\lambda}  \frac{\sin 2\pi \mu (F(\overrightarrow{\phi} + \overrightarrow{\theta})-1)}{\pi (F(\overrightarrow{\phi}+\overrightarrow{\theta})-1)} d \overrightarrow{\theta}.
\end{align*}
Write $F(\overrightarrow{\phi}+\overrightarrow{\theta})-1 = c_0 \theta_0 + \cdots +c_{k_1+k_2} \theta_{k_1+k_2} + P_2(\overrightarrow{\theta}) + \cdots + P_k(\overrightarrow{\theta})$ where $P_m(\overrightarrow{\theta})$ is a homogeneous polynomial of degree $m$. Note that $c_0 = \partial F / \partial \theta_0 |_{\overrightarrow{\theta} = \overrightarrow{0}} = k_0 \phi_0^{k_0 -1} \neq 0$. Now for $r_0,r_1,r_2 \in \mathbb{R}$ consider the equation
\begin{equation*}
    r_0^{k_0} \phi_0^{k_0} + r_1^{k_1} N_K(\phi_1,\cdots,\phi_{k_1}) + r_2^{k_2} N_E(\phi_{k_1+1},\cdots,\phi_{k_1+k_2}) - 1 = 0.
\end{equation*}

Since both of $\phi_0$ and $N_K(\phi_1,\cdots,\phi_{k_1}) + N_E(\phi_{k_1+1},\cdots,\phi_{k_1+k_2})$ are nonzero, one can choose $r_0,r_1,r_2>0$ such that $k_0(r_0 \phi_0)^{k_0-1}=1$ and still satisfying
\begin{align*}
    &F(r_0\phi_0,r_1 \phi_1,\cdots,r_1\phi_{k_1},r_2\phi_{k_1+1},\cdots,r_2\phi_{k_1+k_2})-1 = 0 \quad \text{ and }\\ &N_K(r_1\phi_1,\cdots,r_1\phi_{k_1}) + N_E(r_2\phi_{k_1+1},\cdots,r_2\phi_{k_1+k_2}) \neq 0.
\end{align*}

 So we can assume $c_0 = k_0 \phi_0^{k_0-1} = 1$ from the beginning.

For $|\overrightarrow{\theta}| < \lambda$, we have $|F(\overrightarrow{\phi} + \overrightarrow{\theta})-1| < \sigma$ where $\sigma=\sigma(\lambda)$ is small for small $\lambda$. Put $F(\overrightarrow{\phi} + \overrightarrow{\theta})-1 = t$, and consider this as a map from $\theta_0$ to $t$. Then, if $\lambda$ is sufficiently small, the inverse function theorem tells us that $\theta_0$ can be expressed in terms of $t,\theta_1,\cdots,\theta_{k_1+k_2}$ as a power series
\begin{equation*}
    \theta_0 = t - c_1\theta_1 - \cdots - c_{k_1+k_2} \theta_{k_1+k_2} + \mathcal{P}(t,\theta_1,\cdots,\theta_{k_1+k_2})
\end{equation*}
where $\mathcal{P}$ is a multiple power series whose least degree terms are of degree at least 2. Hence $\partial \theta_0 / \partial t = 1 + \mathcal{P}_1(t,\theta_1,\cdots,\theta_{k_1+k_2})$ where $\mathcal{P}_1$ is a multiple power series without a constant term. By taking $\lambda$ sufficiently small, we can make $|\mathcal{P}_1(t,\theta_1,\cdots,\theta_{k_1+k_2})| < 1/2$ for $|\theta_1|,\cdots,|\theta_{k_1+k_2}| < \lambda$, $|t| < \sigma$. A change of variable from $\theta_0$ to $t$ gives
\begin{align*}
    \frak{J}(\mu) &= \int_{-\lambda}^{\lambda}\cdots\int_{-\lambda}^{\lambda}  \frac{\sin 2\pi \mu (F(\overrightarrow{\phi} + \overrightarrow{\theta})-1)}{\pi (F(\overrightarrow{\phi}+\overrightarrow{\theta})-1)} d\theta_1\cdots d\theta_{k_1+k_2} d \theta_0\\
    &\sim \int_{-\sigma}^{\sigma} \frac{\sin 2 \pi \mu t}{\pi t} V(t) dt
\end{align*}
where $V(t) = \int_{-\lambda}^{\lambda}\cdots\int_{-\lambda}^{\lambda} \left( 1 + \mathcal{P}_1(t,\theta_1,\cdots,\theta_{k_1+k_2}) \right) d \theta_1\cdots d\theta_{k_1+k_2}$ and we wrote $a\sim b$ to mean that the limit of their ratio equals $1$.

$V(t)$ is clearly a continuous function of $t$ for $|t|$ sufficiently small. We also observe that $V(t)$ has left and right derivatives at every value of $t$, and these derivatives are certainly bounded for $t$ in a small confined region. Therefore by Fourier integral theorem one has
\begin{equation*}
    \lim_{\mu \rightarrow \infty} \frak{J}(\mu) = \lim_{\mu \rightarrow \infty} \int_{-\sigma}^{\sigma} \frac{\sin 2 \pi \mu t}{\pi t} V(t) dt = V(0) =: \frak{J}_0.
\end{equation*}
But
\begin{align*}
    |V(0)| &= \left| \int_{-\lambda}^{\lambda}\cdots\int_{-\lambda}^{\lambda} \left( 1 + \mathcal{P}_1(0,\theta_1,\cdots,\theta_{k_1+k_2}) \right) d \theta_1\cdots d\theta_{k_1+k_2} \right|\\
    &>\int_{-\lambda}^{\lambda}\cdots \int_{-\lambda}^{\lambda} \frac{1}{2} d\theta_1\cdots d\theta_{k_1+k_2} > 0
\end{align*}
and the theorem follows.
\qed\end{proof}

We merely state the estimation on the minor arcs, which can be easily seen in ~\cite{Birch}.

\begin{thm}\label{thm_integration over minor arcs}
There exists $\delta > 0$ that depends only on $K$ and $E$ such that
\begin{equation*}
\int_{\frak{m}} \sum_{\overrightarrow{z} \in \breve{\frak{B}}(n)} e(\alpha F(\overrightarrow{z}))e(-n \alpha )d\alpha \ll n^{1+1/{k_0} -\delta}.
\end{equation*}
\end{thm}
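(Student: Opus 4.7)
The plan is to reproduce the classical minor-arc argument of Birch-Davenport-Lewis \cite{Birch}, exploiting the fact that $F$ is a sum of polynomials in three disjoint blocks of variables. Writing
\begin{equation*}
T(\alpha) := \sum_{\overrightarrow{z} \in \breve{\frak{B}}(n)} e(\alpha F(\overrightarrow{z})) = T_0(\alpha)\, T_K(\alpha)\, T_E(\alpha),
\end{equation*}
the factorization is legitimate because $\breve{R}_Q$ splits as $\{0,\ldots,Q-1\}\times \breve{R}_Q^K \times \breve{R}_Q^E$, so the restricted-sum congruences decouple across $z_0$, $\overrightarrow{x}$, and $\overrightarrow{y}$. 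The strategy is to take a pointwise Weyl-type bound on one factor (the pure-power sum $T_0$) and an $L^1$ mean bound on the product of the two norm-form sums $T_K T_E$.

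For the first step, note that for $\alpha\in\frak{m}$ the Farey dissection of order $n^{1-\nu}$ together with Dirichlet's theorem gives a rational approximation $a/q$ with $n^{\nu}<q\le n^{1-\nu}$ and $|\alpha-a/q|\le (qn^{1-\nu})^{-1}$. Weyl's inequality (Lemma 2.4 of \cite{Vaughan}) then yields
\begin{equation*}
\sup_{\alpha\in\frak{m}} |T_0(\alpha)| \ll X_0^{1-\sigma_0+\epsilon}
\end{equation*}
for some $\sigma_0 = \sigma_0(k_0)>0$, which provides the necessary power saving on the $z_0$-sum.

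For the second step, Parseval identifies
\begin{equation*}
\int_0^1 |T_K(\alpha)|^2\, d\alpha = \#\bigl\{(\overrightarrow{x}_1,\overrightarrow{x}_2)\in (X_1\frak{B}_1\cap\breve{R}_Q^K)^2 : N_K(\overrightarrow{x}_1)=N_K(\overrightarrow{x}_2)\bigr\}.
\end{equation*}
Bounding the number of $\overrightarrow{x}\in X_1\frak{B}_1$ whose norm equals a prescribed integer by the Birch-type divisor estimate (which gives $\ll n^{1-1/k_1+\epsilon}$ preimages per value) and summing over values yields $\int_0^1|T_K|^2\ll n^{2-1/k_1+\epsilon}$, and similarly $\int_0^1|T_E|^2\ll n^{2-1/k_2+\epsilon}$. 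Cauchy-Schwarz then gives
\begin{equation*}
\int_{\frak{m}} |T_K(\alpha)T_E(\alpha)|\, d\alpha \le \Bigl(\int_0^1|T_K|^2\Bigr)^{1/2}\Bigl(\int_0^1|T_E|^2\Bigr)^{1/2} \ll n^{2 - 1/(2k_1) - 1/(2k_2) + \epsilon}.
\end{equation*}
Combining the two bounds,
\begin{equation*}
\Bigl|\int_{\frak{m}} T(\alpha)\, e(-n\alpha)\, d\alpha\Bigr| \le \sup_{\alpha\in\frak{m}} |T_0(\alpha)| \cdot \int_{\frak{m}} |T_K T_E|\, d\alpha \ll n^{1+1/k_0-\delta}
\end{equation*}
with $\delta = \sigma_0/k_0 + 1/(2k_1) + 1/(2k_2) - 3\epsilon > 0$, which is the asserted estimate.

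The main obstacle is the divisor-type bound on the number of representations of an integer by a norm form, which is precisely the Birch input in \cite{Birch}. In the present setting the fields $K$, $E$ may be totally complex and of distinct degrees, but since they are abelian the bound extends via the standard factorization of $N_{K|\mathbb{Q}}$ into prime ideals without essential change, and the restriction to $\breve{R}_Q^K$ only removes terms, so the mean value estimate is unaffected. This is why the argument of \cite{Birch} transfers verbatim, justifying the omission in the text.
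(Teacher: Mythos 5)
The paper itself offers no minor-arc proof, deferring entirely to~\cite{Birch}, so the relevant question is whether your argument would work on its own. Your framework is right and matches the BDL method: the factorisation $T=T_0T_KT_E$ is legitimate because the restriction $\overrightarrow{z}\equiv\breve{R}_Q$ decouples across the three blocks, Weyl gives a genuine power saving on $T_0$ for $\alpha\in\frak{m}$, and the plan to control $T_KT_E$ in mean square via Parseval and Cauchy--Schwarz is exactly the correct shape. However, the arithmetic input you quote is too weak, and the exponent bookkeeping in the last step is wrong. The number of $\overrightarrow{x}\in X_1\frak{B}_1\cap\mathbb{Z}^{k_1}$ with $N_K(\overrightarrow{x})=m$ (for $m\neq0$) is $\ll n^{\epsilon}$, not $\ll n^{1-1/k_1+\epsilon}$: the principal ideal $(\overrightarrow{x}\cdot\overrightarrow{\varphi})$ has norm $|m|$, there are only $d(|m|)\ll n^{\epsilon}$ ideals of that norm, and for each the associates with all conjugates $\ll X_1$ number $\ll(\log n)^{r}\ll n^{\epsilon}$ by Dirichlet's unit theorem. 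This $n^{\epsilon}$ divisor estimate, not the bound $n^{1-1/k_1}$ (which is the trivial count for a generic degree-$k_1$ polynomial and uses nothing about norm forms), is precisely Birch's key insight. With it one gets $\int_0^1|T_K|^2\ll n^{1+\epsilon}$ and $\int_0^1|T_E|^2\ll n^{1+\epsilon}$, hence $\int_{\frak{m}}|T_KT_E|\ll n^{1+\epsilon}$, and the theorem follows with $\delta$ essentially $\sigma_0/k_0$.

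With the bounds you actually wrote down, the estimate does not close: multiplying $\sup_{\frak{m}}|T_0|\ll n^{(1-\sigma_0)/k_0+\epsilon}$ by $\int_{\frak{m}}|T_KT_E|\ll n^{2-1/(2k_1)-1/(2k_2)+\epsilon}$ gives exponent
\[
\frac{1}{k_0}-\frac{\sigma_0}{k_0}+2-\frac{1}{2k_1}-\frac{1}{2k_2}+O(\epsilon)
= 1+\frac{1}{k_0}-\Bigl(\frac{\sigma_0}{k_0}+\frac{1}{2k_1}+\frac{1}{2k_2}-1\Bigr)+O(\epsilon),
\]
so the true saving is $\sigma_0/k_0+1/(2k_1)+1/(2k_2)-1$, which is negative for essentially all admissible $(k_0,k_1,k_2)$ (even in the original BDL case $k_0=k_1=k_2=k\ge2$). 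Your stated $\delta=\sigma_0/k_0+1/(2k_1)+1/(2k_2)-3\epsilon$ drops the $-1$ and so does not match the preceding lines. The fix is to replace the weak preimage count by the correct $n^{\epsilon}$ bound as above; then the structure you laid out does deliver the theorem.
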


\section{Conclusion}\label{sec_conclusion}

By Theorems~\ref{thm_integration over major arcs}, ~\ref{thm_singular series}, ~\ref{thm_singular integral} and ~\ref{thm_integration over minor arcs} one has
\begin{thm}\label{thm_conclusion}
The number of representations, $\breve{r}(n)$, of $n$ in the form $z_0^{k_0} + N_K(\overrightarrow{x}) + N_E(\overrightarrow{y})$ with $\overrightarrow{z} = (z_0,\overrightarrow{x},\overrightarrow{y}) \in \breve{\frak{B}}(n)$ satisfies
\begin{equation*}
    \breve{r}(n) = \frac{|\breve{R}_Q|}{Q^{1+k_1+k_2}} \frak{J}_0  \left(\prod_{p \in \mathbb{P}}\breve{\chi}_n(p)\right)  n^{1 + 1/{k_0}}+ o\left( n^{1 + 1/{k_0}} \right)
\end{equation*}
where $ 1 \ll \frak{J}_0  \prod_{p \in \mathbb{P}}\breve{\chi}_n(p) \ll 1$.
\end{thm}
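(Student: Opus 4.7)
The plan is to assemble Theorem~\ref{thm_conclusion} directly from the four preceding theorems, with no genuinely new work required beyond verifying that all error terms are indeed $o(n^{1+1/k_0})$.

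First I would split the integral
\begin{equation*}
\breve{r}(n) = \int_{\frak{M}} \sum_{\overrightarrow{z} \in \breve{\frak{B}}(n)} e(\alpha F(\overrightarrow{z}))e(-n\alpha)\,d\alpha + \int_{\frak{m}} \sum_{\overrightarrow{z} \in \breve{\frak{B}}(n)} e(\alpha F(\overrightarrow{z}))e(-n\alpha)\,d\alpha.
\end{equation*}
The minor arc piece is $O(n^{1+1/k_0-\delta})$ by Theorem~\ref{thm_integration over minor arcs}, and the major arc piece equals
\begin{equation*}
\frac{|\breve{R}_Q|}{Q^{1+k_1+k_2}} \breve{\frak{S}}(n^{\nu},n)\,\frak{J}(n^{\nu})\,n^{1+1/k_0} + O\bigl(n^{1+1/k_0-1/k+5\nu}\bigr)
\end{equation*}
by Theorem~\ref{thm_integration over major arcs}. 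Since $\nu < 1/(5k)$, the exponent $1+1/k_0 - 1/k + 5\nu$ is strictly less than $1+1/k_0$, so both error terms are absorbed into a single $o(n^{1+1/k_0})$ term.

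Next I would pass from the truncated singular series and integral to their limits. Theorem~\ref{thm_singular series} provides $|\breve{\frak{S}}(\infty,n) - \breve{\frak{S}}(n^{\nu},n)| \ll n^{-\nu\delta}$, with $\breve{\frak{S}}(\infty,n) = \prod_{p \in \mathbb{P}} \breve{\chi}_n(p)$ bounded between two positive absolute constants. Theorem~\ref{thm_singular integral} gives $\frak{J}(n^{\nu}) \to \frak{J}_0 > 0$. Combining these, the main term $\frac{|\breve{R}_Q|}{Q^{1+k_1+k_2}} \breve{\frak{S}}(n^{\nu},n)\,\frak{J}(n^{\nu})\,n^{1+1/k_0}$ differs from $\frac{|\breve{R}_Q|}{Q^{1+k_1+k_2}} \frak{J}_0 \prod_{p} \breve{\chi}_n(p) \cdot n^{1+1/k_0}$ by at most $o(n^{1+1/k_0})$, using that both $\frak{J}(n^{\nu})$ and $\breve{\frak{S}}(n^{\nu},n)$ are $O(1)$.

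Finally, the boundedness statement $1 \ll \frak{J}_0 \prod_p \breve{\chi}_n(p) \ll 1$ is just the combination of the positive constant lower bound on $\frak{J}_0$ from Theorem~\ref{thm_singular integral} with the two-sided bound $c_1 < \prod_p \breve{\chi}_n(p) < c_2$ extracted from Theorem~\ref{thm_singular series}. There is no real obstacle here; the only mild bookkeeping is checking that the choice $\nu < 1/(5k)$ made in the setup is exactly what makes $n^{1+1/k_0-1/k+5\nu}$ a genuine saving over $n^{1+1/k_0}$, so that both the major arc error and the minor arc error fit inside the $o(n^{1+1/k_0})$ remainder.
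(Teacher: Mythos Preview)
Your proposal is correct and follows exactly the paper's approach: the paper's own proof is literally the one-line sentence ``By theorem~\ref{thm_integration over major arcs}, \ref{thm_singular series}, \ref{thm_singular integral} and \ref{thm_integration over minor arcs} one has\ldots'', so you have in fact supplied more detail than the paper itself. The only minor remark is that the bound $|\breve{\frak{S}}(\infty,n) - \breve{\frak{S}}(n^{\nu},n)| \ll n^{-\nu\delta}$ you invoke appears inside the \emph{proof} of Theorem~\ref{thm_singular series} rather than in its statement, but this is purely a matter of citation.
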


A remark can be made on this result. Theorem~\ref{thm_conclusion} is optimal in the sense that the term $z_0^{k_0}$ is invincible to make the polynomial almost universal in any cases. We give here an example of a sum of two norms $N_K(\overrightarrow{x}) + N_E(\overrightarrow{y})$ which is locally universal but is not almost universal. This can be summarized as follows.

Let $f = f_{A,B}(\overrightarrow{u}) = N_{\mathbb{Q}(\sqrt{-A})|\mathbb{Q}}(u_1 + u_3\sqrt{-A}) + N_{\mathbb{Q}(\sqrt{-B})|\mathbb{Q}}(u_2 + u_4\sqrt{-B}) = u_1^2 + u_2^2 + A u_3^2 + B u_4^2$ be a quaternary quadratic form. Choose $A,B$ among prime numbers congruent to $1$ modulo $8$ that are sufficiently large so that $f$ cannot represent all of $\{$ $16$, $32$, $48$, $80$, $96$, $112$, $160$, $224$ $\}$. (In particular, $48 = 2^4\cdot 3 \neq u_1^2 + u_2^2$). $f$ is locally universal, since $N_{\mathbb{Q}(\sqrt{-A})|\mathbb{Q}}$ takes every unit value in $\mathbb{Z}_p$ for $p \neq 2,A$ and the same holds for $N_{\mathbb{Q}(\sqrt{-B})|\mathbb{Q}}$ in $\mathbb{Z}_p$ when $p\neq 2,B$. When $p=2$, we see that $A$ and $B$ are in $(\mathbb{Z}_2^{\ast})^2$ and $f$ is equivalent to $u_1^2 + u_2^2 + u_3^2 + u_4^2$ over $\mathbb{Z}_2$, which is universal by Lagrange theorem. But our choice of $A,B$ makes $f$ a 2-anisotropic form(\cite{Cassel}) with a large discriminant $d(f) = AB$. By the complete classification of almost universal quaternary quadratic forms(\cite{bochnak2007almost}), therefore, $f$ is not almost universal.


\bibliographystyle{amsplain}
\bibliography{AlmostUniversality}

\providecommand{\bysame}{\leavevmode\hbox to3em{\hrulefill}\thinspace}
\providecommand{\MR}{\relax\ifhmode\unskip\space\fi MR }
\providecommand{\MRhref}[2]{%
  \href{http://www.ams.org/mathscinet-getitem?mr=#1}{#2}
}
\providecommand{\href}[2]{#2}
\begin{thebibliography}{1}

\bibitem{Birch}
B.~J. Birch, H.~Davenport, and D.~J. Lewis, \emph{The addition of norm forms},
  Mathematika \textbf{9} (1962), 75--82.

\bibitem{bochnak2007almost}
J.~Bochnak and B.-K. Oh, \emph{Almost-universal quadratic forms: An effective
  solution of a problem of ramanujan}, Duke Math. J. \textbf{147} (2009),
  no.~1, 131--156.

\bibitem{Cassel}
J.W.S. Cassels, \emph{Rational quadratic forms}, London Math. Soc. Monogr.
  (N.S.), no.~13, 1978.

\bibitem{Davenport}
H.~Davenport, \emph{Analytic methods for diophantine equations and diophantine
  inequalities}, 2nd ed., Cambridge University Press, 2005.

\bibitem{Murty}
J.~Esmonde and M.R. Murty, \emph{Problems in algebraic number theory}, Grad.
  Texts in Math., vol. 190, Springer, 2005.

\bibitem{Kato}
K.~Kato, N.~Kurokawa, and T.~Saito, \emph{Number theory 2: Introduction to
  class field theory}, Translations of mathematical monographs, American
  Mathematical Soc., 2012.

\bibitem{Ribenboim}
P.~Ribenboim, \emph{Classical theory of algebraic numbers}, Springer, 2001.

\bibitem{Vaughan}
R.C. Vaughan, \emph{The \text{H}ardy-\text{L}ittlewood method}, 2nd ed.,
  Cambridge University Press, 1997.

\end{thebibliography}

\end{document}